\crefname{hypothesis}{Hypothesis}{Hypotheses}
\newtheorem{example}{Example}[section]
\def\P{{\mathbb P}}
\def\N{{\mathbb N}}
\def\R{{\mathbb R}}
\def\S{{\mathcal S}}
\def\E{{\mathcal E}}
\def\O{{\mathcal O}}
\def\bfp{{\bf p}}
\def\bfa{{\bf a}}
\def\bfb{{\bf b}}
\def\bfc{{\bf c}}
\def\bfd{{\bf d}}
\def\bfv{{\bf v}}
\def\bfx{{\bf x}}
\def\bfy{{\bf y}}
\def\bfz{{\bf z}}
\def\bfw{{\bf w}}
\def\bfu{{\bf u}}
\def\bfo{{\bf {\boldsymbol \nu}}}
\def\bfe{{\bf 1}}
\def\nhs{{\texttt{NM01}}}
\def\prox{{\rm{Prox}}_{\alpha\|(\cdot)_+\|_0}}
\def\pro{{\rm{Prox}}_{\tau\lambda\|(\cdot)_+\|_0}}
\def\ts{P-stationary point}
\def\beq{\begin{eqnarray}}
\def\eeq{\end{eqnarray}}
\title{Quadratic Convergence of Smoothing Newton's Method for $0/1$ Loss  Optimization\thanks{Received by the editors April 1, 2021; accepted for publication (in revised form) September 5, 2021; published electronically December 13, 2021.\\
\href{https://doi.org/10.1137/21M1409445}{~~~~~https://doi.org/10.1137/21M1409445}
\funding{This work was funded by the the National Science Foundation of China (11971052, 11801325, 11771255) and Young Innovation Teams of Shandong Province (2019KJI013).}}
}
\author{Shenglong Zhou\thanks{Department of Electrical and Electronic Engineering, Imperial College London, London SW7 2AZ, United Kingdom. (\email{shenglong.zhou@imperial.ac.uk})}
\and 
Lili Pan\thanks{Department  of  Mathematics,  Shandong  University  of  Technology,  Zibo  255049, People's Republic of China.
(\email{panlili1979@163.com}).} 
\and 
Naihua Xiu\thanks{Department of Applied Mathematics, Beijing Jiaotong University, Beijing 10044, People's Republic of China.
(\email{nhxiu@bjtu.edu.cn}).}
\and 
  Hou-Duo Qi\thanks{School of Mathematics, University of Southampton, Southampton SO17 1BJ, United Kingdom.
  (\email{h.qi@soton.ac.uk}).}
}
\DeclareMathOperator{\diag}{diag}
\begin{document}

\maketitle

\begin{abstract}
It has been widely recognized that the $0/1$-loss function is one of the most natural choices for modelling classification errors, 
and it has a wide range of applications including support vector machines and $1$-bit compressed sensing. 
Due to the combinatorial nature of the $0/1$-loss function, methods based on convex relaxations or smoothing approximations have dominated the existing research and are often able to provide approximate solutions of good quality.
However, those methods are not optimizing the $0/1$-loss function directly and hence no optimality has been established for the original problem. 
This paper aims to study the optimality conditions of the $0/1$ function minimization, and for the first time to develop Newton's method that directly optimizes the $0/1$ function with a local quadratic convergence under reasonable conditions. Extensive numerical experiments demonstrate its superior performance as one would expect from Newton-type methods.
\end{abstract}
 
\begin{keywords}
 $0/1$-loss function, optimality conditions, Newton's method, locally quadratic convergence, superior numerical performance
\end{keywords}

\begin{AMS}
49M05, 90C26, 90C30, 65K05 
\end{AMS}
 
\section{Introduction}

This paper is concerned with the $0/1$-loss optimization:
\begin{equation}\label{ell-0-1}
\underset{\bfx\in\R^n}{\min}~ f(\bfx)+\lambda \|( A\bfx +\bfb )_+\|_0, 
\end{equation} 
where $f:\R^n\rightarrow\R$ is  twice continuously differentiable, 
$\lambda>0 $ is a penalty parameter and $A\in \R^{m\times n}, \bfb\in \R^m$. 
Moreover, ${\bfz}_+:=((z_1)_+, \ldots, (z_m)_+)^\top $ with $z_+:=\max\{z,{0}\}$ 
and $\|\bfz\|_0$ is the $\ell_0$ norm of $\bfz$, counting the number of its non-zero entries. 
Hence, $\|\bfz_+\|_0$ counts the number of positive entries of $\bfz$,
i.e., $\|\bfz_+\|_0=\sum_{i=1}^m \ell_{0/1}(z_i)$, where
\begin{eqnarray*}
\ell_{0/1}(z)=
\begin{cases}
1, &z>0,\\
0& z\leq 0.
\end{cases}
\end{eqnarray*} 
The function $\ell_{0/1}(\cdot)$ is known as the Heaviside step function (or the unit step function) in \cite{weisstein2002heaviside, evgeniou2000regularization} or simply the $0/1$-loss function in \cite{friedman1997bias, hastie2009elements, brooks2011support}.  It plays an active role in many applications including support vector machines (SVM) \cite{CV95}, the one-bit compressed sensing \cite{BB08}, the maximum rank correlation \cite{han1987non}, and the problem of area under curves \cite{ma2005regularized}.  However, optimization related to the $0/1$-loss function is NP-hard, see \cite{ben2003difficulty,feldman2012agnostic}.

A vast body of work has developed algorithms for optimization involving the $0/1$-loss function by making use of its continuous surrogates. A major concern on this part of research is that convergence analysis is often conducted on the surrogate problems rather than on the original ones that involve $0/1$-loss functions. On the other hand,  there also exists a large body of research that addresses the $0/1$-loss optimization directly by taking advantage of the intrinsic appealing feature of the loss function, which captures the discrete nature of the binary classification. We will review two classes of such methods below.

The first class consists of mixed integer programming (MIP), which has become a leading approach to directly optimizing the 0/1-loss function, see \cite{liittschwager1978integer, bajgier1982experimental} for some earlier work. It is straightforward to relate the $0/1$ loss to the misclassification minimization for discrimination problems \cite{rubin1997solving,brooks2010analysis}.  This approach is in general effective with a major issue of scalability for large-sized problems. Much progress has also been made in improving the scalability of MIP by employing various strategies of reducing the problem sizes.  Those include, for instance,   decomposition strategy \cite{rubin1997solving}, local search \cite{nguyen2013algorithms},  and convex hull cuts in a branch-and-bound framework \cite{brooks2011support}, to name a few.  Some recent work includes \cite{tang2014mixed,ustun2016supersparse}, where different integer programming reformulations for the 0/1-loss minimizations are built and then tackled via the modern commercial MIP solvers, such as Gurobi and CPLEX. Despite those progresses, the speed of computation is still a bottleneck for the MIP approach

The second class of methods comes from continuous optimization. Since the $0/1$-loss function is non-convex, non-differentiable and has zero gradients whenever differentiable,  coordinate descent directions are natural choices for decreasing the objective. There are a number of such methods including random coordinate descent algorithms \cite{LL2007}, greedy coordinate descent algorithms \cite{zhai2013direct}, and stochastic coordinate descent heuristic \cite{xie2019stochastic}. Convergence for this class of algorithms is often established in the probabilistic sense. Other types include a column generation approach \cite{carrizosa2010binarized} and an alternating direction method of multipliers \cite{wang2019support}, which is devoted to the $0/1$-loss regularized SVM problem. 

This paper aims to extend the classical Newton method to (\ref{ell-0-1}) and to prove its local quadratic convergence.  The investigation of Newton's method is motivated and supported by the following facts. 

\begin{itemize}
	\item[(i)]Newton's method has been recently developed by the authors in \cite{ZXQ21} for optimization problems with a sparse constraint $\| \bfx\|_0 \le s$.  Its performance is outstanding in comparison with a number of leading solvers that employ either hard- or soft-thresholding techniques. The essential difference of problem (\ref{ell-0-1}) from that in \cite{ZXQ21} is that our operator is a composite one that involves the operators $(\cdot)_+$,  $\| \cdot \|_0$, and the linear classification inequalities $A\bfx - \bfb \ge 0$.   Because of this, the framework developed in \cite{ZXQ21} cannot be used here. However, the success in \cite{ZXQ21} naturally leads us to investigate what form a Newton's method would take for (\ref{ell-0-1}) and whether it is computationally efficient. 
	
	\item[(ii)] In some important applications, the objective function $f(\bfx)$ is separable in the following form:
	\begin{eqnarray} \label{Block-f}
	  f(\bfx) = \sum_{i=1}^M f_i(\bfx_{(i)}),
	\end{eqnarray} 
	where each $\bfx_{(i)}$ ($i=1, \ldots, M$) is a subvector of $\bfx$ and not overlapping with each other.  Consequently, the Hessian of $f(\bfx)$ is block-diagonal. When each block is of small size, the inverse of Hessian (when exists) can be fast computed. In the particular applications of SVM and one-bit compressed sensing, the block-size is $1$ and the Hessian matrix is hence diagonal. One can imagine that Newton's method would be extremely efficient for such applications.  
    
    \item[(iii)] Although it is challenging to design a gradient-type method for the $0/1$-loss function due to its zero gradient (when exists),  we would like to emphasize that it is easy to compute the proximal operator of the $0/1$-loss function. Proximal operators have long been known to be closely related to optimality conditions in constrained optimization.  In particular, the proximal operator of the zero norm $\|\cdot\|_0$  characterizes a class of stationary points for sparse optimization and many hard- and soft-thresholding algorithms actually converge to such stationary points, see Beck and Eldar \cite{Beck13} for an excellent illustration. 
	
\end{itemize}

Our first step towards developing Newton's method is to establish a
stationary equation of the type:
\begin{equation} \label{F0}
F(\bfw; T):=\left[\begin{array}{rrl}
\nabla f(\bfx)  + A_{T}^\top \bfz_{T}  \\
A_{T} \bfx + \bfb_{T} \\
\bfz_{\overline T} 
\end{array}
\right] = 0 ,
\end{equation}
where $\bfw^\top := (\bfx; \bfz)$ with $\bfz := A \bfx + \bfb$,  
$T \subseteq \{1, \ldots, m\}$ is an index set
with $\overline{T}$ being its complementary set, and $A_T$ consists of the rows 
in $A$ indexed by $T$.    
This equation is characterized by the proximal operator of the $0/1$-loss function at a local minimum of (\ref{ell-0-1}). We call it the $P$-stationary  {(abbreviation for Proximal-stationary)} equation. See  \Cref{Stationary-Thm} and Equation (\ref{sta-eq-1}) for more details.  The index set $T$ depends on the optimal solution $\bfw^*$ and hence is unknown.

 The second step is to construct a scheme that defines $T_k$ at a given point $\bfw^k$ and approximate the true $T$. The Newton step is to solve the equation  
	$
	F(\bfw; T_k) = 0
	$
	for the next iterate $\bfw^{k+1}$. Such a computational scheme for 
	$T_k$ is described in (\ref{p-stationary-T}) and (\ref{T-k}). 
	However, the difficulty is that there is no guarantee that this scheme will be able to identify the correct $T$. In other words, we may be encountered with different $T_k$ each iteration no matter how close our iterate is to the optimal solution. This is where the convergence theory of classical Newton's method fails to go through.  
	Now we introduce a practically important technique of smoothing motivated by
	Chen et al. \cite{chen1998global}.
	Instead of solving the equation 
	$
	F(\bfw; T_k) = 0,
	$
	we try to solve its perturbed version:
	\begin{equation} \label{Fmu}
	F_{\mu_k} (\bfw; T_k) := F(\bfw; T_k) + 
	\left[
	\begin{array}{c}
	0 \\
	-\mu_k \bfz_{T_k} \\
	0
	\end{array} 
	\right] = 0 ,
	\end{equation}
	where the smoothing parameter $\mu_k>0$ will be properly chosen as in (\ref{update-mu}).
	Its Jacobian matrix has the following structure
	\[
	\nabla F_{\mu_k}(\bfw; T_k):=\left[\begin{array}{ccc}
	\nabla^2 f(\bfx)   & A_{T_k} ^\top &0\\
	A_{T_k} &-\mu_k I  &0 \\
	0 & 0 & I
	\end{array}
	\right]
	\]
	and it is nonsingular if and only if the matrix 
	$
	( \nabla^2 f(\bfx) +  A_{T_k} ^\top A_{T_k} /\mu_k)
	$
	is nonsingular (i.e., the Schur complement of $(-\mu_k I)$ in the top $2\times 2$ block is nonsingular). 
	We note that nonsingularity may still hold even if $f(\cdot)$ is not convex 
	provided that $A_{T_k}$ has full row-rank. 
	Due to its connection to \cite{chen1998global}, 
	we call our method a smoothing Newton method.
	We also like to point out another interesting connection. 
	When $\mu_k$ is fixed, our algorithmic framework is analogous to the primal-dual active-set algorithms extensively studied in \cite{hintermuller2002primal,ito2003semi,fan2014primal},
	whose main targets are quadratic objective functions.  

Our last step is to establish the bounds
\[
\| F(\bfw^{k+1}; T_{k+1}) \| = C_1 \| F(\bfw^k; T_k) \|^2, \qquad
\| \bfw^{k+1} - \bfw^* \| = C_2 \| \bfw^k - \bfw^*\|^2,
\]
where the constants $C_1$ and $C_2$ only depend on  the optimal solution $\bfw^*$.  Those bounds imply the quadratic convergence of our smoothing Newton method provided that the initial point is close to $\bfw^*$ and complete our theoretical investigation, see  \Cref{quadratic-convergence}. The efficiency of the Newton method is confirmed through extensive numerical experiments including $40$ SVM problems from real data ($16$ of them for $m \le n$ and $24$ for $m > n$)  and simulated $1$-bit compressing data against some existing solvers. As far as we know, this is the first Newton-type method for the $0/1$-loss optimization (\ref{ell-0-1}). 


This paper is organized as follows.  In the next section, we analyze the $0/1$-loss function, calculating its subdifferentials and the proximal operator.  In  \Cref{sec:opt},  we establish the first-order necessary and sufficient optimality conditions of the problem \eqref{ell-0-1} through the proximal operator of $0/1$-loss function, leading to the well-defined $P$-stationary points. In \Cref{sec:Newton}, we reformulate the $P$-stationary condition as a system of nonlinear equations and develop Newton's method with the promised quadratic convergence. In \Cref{sec:numerical}, we conduct extensive numerical experiments to demonstrate the outstanding performance of Newton's method against a few leading solvers for the problems of SVM and 1-bit compressed sensing.  We conclude the paper in \Cref{Sec-Conclusion}.

\section{Preliminaries} \label{Sec-Preliminaries}

We first list some notation that is frequently used throughout the paper. 
Given a subset $T\subseteq\N_m:=\{1,2,\ldots,m\}$, its cardinality and complementary set are $|T|$ and $\overline T$.   The neighbourhood of $\bfx\in\R^n$ with a radius $\delta>0$  is denoted by $N( \bfx,\delta)=\{\bfv\in\R^n: \|\bfv- \bfx \|< \delta\}$. 
Moreover, $\bfx_{T}$ (resp. $A_{T}$) represents the sub-vector (resp. sub-matrix) contains elements (resp. rows) of $\bfx$ indexed on $T$.   Particularly, $A_i$ is the $i$th row of $A$. 
We combine two vectors as $(\bfx;\bfy):=(\bfx^\top~\bfy^\top)^\top$. The $i$th   largest singular value of $H\in\R^{n\times n}$ is written $\sigma_{i}(H)$, namely $\sigma_{1}(H)\geq\sigma_{2}(H)\geq\cdots\geq\sigma_{n}(H).$
Particularly, we write $\|H\|:=\sigma_{1}(H)$ and $\sigma_{\min}(H):=\sigma_{n}(H).$ Finally, let $I$ be the identity matrix and $\bfe$ be the vector with all entries being ones. 

Next we describe the formula for computing the subdifferential of 
$\|\bfz_+\|_0$ and its proximal operator.
We note that $\|\cdot\|_0$ is lower semi-continuous (lsc)
and $\bfz_+$ is obviously continuous, 
the composition $\|\bfz_+\|_0$ is also lsc.
For a proper and lsc function $g: \R^m \mapsto \R$, its
subdifferential $\partial g(\cdot)$ is well defined as in \cite[Definition 8.3]{RW1998}. 
The following results are easy to prove after simple calculations.

\begin{lemma} \label{subdifferentials}

\begin{itemize}
	
 \item[(i)] We have
 \begin{eqnarray}\label{subfi}
  \partial\|{\bfy}_+\|_0
  =\left\{{\bfv}\in {\R}^m: v_i\left\{\begin{array}{ll}
 \geq0,& y_i=0,\\
 =0,& y_i\neq0,
 \end{array}
 \right. i\in{\N}_m\right\}. 
 \end{eqnarray}
 
 \item[(ii)] Let $g(\bfx):=\|(h(\bfx))_+\|_0$, where $h:\R^n\rightarrow\R^m$ is differentiable,  and $\Gamma:=\{i\in\N_m: h_i(  \bfx)=0\}$ for a given a point $ \bfx\in\R^n$.  If 
 \begin{eqnarray}
 \label{chain-rule-cond}
 \left.
 \begin{array}{r}
 \forall \bfz\in\R^{|\Gamma|},~~\bfz\geq 0,~~
 (\nabla h( \bfx))_{\Gamma}^\top \bfz = 0
 \end{array}\right.~~\Longrightarrow~~\bfz = 0,
 \end{eqnarray}
 then the subdifferential of $g(\bfx)$  at $  \bfx$ is  
 \begin{eqnarray}
 \label{chain-rule-sub} 
 \partial g(  \bfx)=\nabla h( \bfx)^\top \partial \|(h(  \bfx))_+\|_0.
 \end{eqnarray}

\end{itemize}
\end{lemma}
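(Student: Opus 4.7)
The plan for part (i) is to exploit the fact that $\|\bfy_+\|_0 = \sum_{i=1}^m \ell_{0/1}(y_i)$ is separable in its coordinates. Since the limiting subdifferential of a sum of functions depending on disjoint groups of variables is the Cartesian product of the individual subdifferentials, it suffices to compute $\partial \ell_{0/1}(y_i)$ for each $i$. At any $y_i \neq 0$ the function $\ell_{0/1}$ is locally constant, so both $\widehat{\partial}\ell_{0/1}(y_i)$ and $\partial \ell_{0/1}(y_i)$ equal $\{0\}$. At $y_i = 0$ I would compute $\widehat{\partial}\ell_{0/1}(0)$ directly from the definition: in the liminf ratio, the right-hand limit as $z\downarrow 0$ blows up to $+\infty$ because of the jump from $0$ to $1$, while the left-hand limit as $z\uparrow 0$ equals $v$; the condition $v\ge 0$ therefore characterizes $\widehat{\partial}\ell_{0/1}(0)=[0,+\infty)$. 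Passing to the limiting subdifferential, any outer limit along $z^k\to 0$ with $\ell_{0/1}(z^k)\to 0$ forces $z^k\le 0$ eventually, so no new elements appear and $\partial\ell_{0/1}(0)=[0,+\infty)$. Assembling the components yields formula (\ref{subfi}).

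For part (ii), I would invoke the classical nonsmooth chain rule (e.g.\ Rockafellar--Wets, Theorem 10.6): if $h$ is $C^1$ and the qualification
\[
\partial^\infty\bigl(\|(\cdot)_+\|_0\bigr)(h(\bfx)) \cap \ker \nabla h(\bfx)^\top = \{0\}
\]
holds, then $\partial g(\bfx) = \nabla h(\bfx)^\top \partial\|(h(\bfx))_+\|_0$. My first task is to identify the horizon subdifferential $\partial^\infty\|(\cdot)_+\|_0$ at $h(\bfx)$. Since $\ell_{0/1}$ is locally constant away from $0$, its horizon subdifferential vanishes there; at $0$, the scaled sets $\sigma_k[0,+\infty)=[0,+\infty)$ for all $\sigma_k>0$, so their outer limit is again $[0,+\infty)$. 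Hence $\partial^\infty\|(\bfy)_+\|_0$ shares the same componentwise description as (\ref{subfi}); in particular, for $\bfy=h(\bfx)$ any element is supported on $\Gamma$ with nonnegative entries.

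My second task is to recognize that the qualification $\partial^\infty f(h(\bfx))\cap \ker \nabla h(\bfx)^\top = \{0\}$ is exactly the stated assumption (\ref{chain-rule-cond}): a vector $\bfv$ in this intersection satisfies $\bfv_{\overline{\Gamma}}=0$, $\bfv_{\Gamma}\ge 0$, and $(\nabla h(\bfx))_\Gamma^\top \bfv_\Gamma = 0$, and hypothesis (\ref{chain-rule-cond}) forces $\bfv_\Gamma=0$, so $\bfv=0$. The chain rule then delivers (\ref{chain-rule-sub}).

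The main obstacle I expect is being careful about the one-sided nature of the jump at $y_i=0$: the directions $z\uparrow 0$ and $z\downarrow 0$ must be handled separately when computing $\widehat{\partial}\ell_{0/1}(0)$, and one must verify that passing to the limiting subdifferential does not enlarge the set, using the fact that any sequence $z^k\to 0$ with $\ell_{0/1}(z^k)\to 0$ must eventually lie in $(-\infty,0]$. Once these subtleties are settled and $\partial^\infty\|(\cdot)_+\|_0$ is shown to coincide with $\partial\|(\cdot)_+\|_0$, the chain-rule step in part (ii) reduces to a clean application of a standard result.
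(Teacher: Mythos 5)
Your proposal is correct and takes essentially the same route as the paper: compute the regular, limiting, and horizon subdifferentials of $\|(\cdot)_+\|_0$ and show they all coincide with the set in \eqref{subfi} (you organize this coordinatewise via separability, the paper works with the vector function directly --- the calculations are identical), then identify \eqref{chain-rule-cond} with the constraint qualification of Rockafellar--Wets Theorem 10.6 and apply that chain rule. The one point you should make explicit is that under the qualification alone Theorem 10.6 yields only the inclusion $\partial g(\bfx)\subseteq \nabla h(\bfx)^\top\partial\|(h(\bfx))_+\|_0$; the asserted equality \eqref{chain-rule-sub} additionally requires that $\|(\cdot)_+\|_0$ be (subdifferentially) regular at $h(\bfx)$, which follows precisely from the coincidences you compute, $\widehat{\partial}\|\bfy_+\|_0=\partial\|\bfy_+\|_0$ and $(\widehat{\partial}\|\bfy_+\|_0)^\infty=\partial^\infty\|\bfy_+\|_0$ --- this is exactly the step the paper makes via \cite[Corollary 8.10]{RW1998} before invoking the chain rule.
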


\begin{proof}
i)   It is straight to check that the regular subdifferential $\widehat{\partial}\|{\bfy}_+\|_0$ (see \cite[Definition 8.3]{RW1998})  of $\|{\bfy}_+\|_0$ at ${\bfy}$ takes the following form,
\begin{eqnarray}\label{subfi-0}
 \widehat{\partial}\|{\bfy}_+\|_0=\left\{{\bfv}\in {\R}^m: v_i\left\{\begin{array}{ll}
 \geq0,& y_i=0,\\
 =0,& y_i\neq0,
 \end{array}
 \right. i\in{\N}_m\right\}=:\Omega(\bfy). 
\end{eqnarray}
We next verify $ \partial\|{\bfy}_+\|_0= \Omega({\bfy}) $. By letting $\varphi(\cdot):=\|(\cdot)_+\|_0$, we have
  \begin{eqnarray}\label{sub-lim-reg}
  \arraycolsep=1.4pt\def\arraystretch{1.5}
  \begin{array}{rclll}
\partial\|{\bfy}_+\|_0&=&\limsup_{{\bfz}\stackrel{\varphi}{\rightarrow}{\bfy}}~ \widehat{\partial}\|{\bfz}_+\|_0 ~~~( \text{by  \cite[Equation 8(5)]{RW1998}})\\
&=&\limsup_{{\bfz}\stackrel{\varphi}{\rightarrow}{\bfy}}~\Omega({\bfz}) ~~~~~~( \text{by \eqref{subfi-0}})\\
&=&\{\bfv\in \R^m:\exists~\bfz^k\stackrel{\varphi}{\rightarrow} \bfy,~\bfv^k\rightarrow \bfv ~\text{with}~\bfv^k\in \Omega(\bfz^k) \}=:\Theta,
\end{array}
 \end{eqnarray}
where  $\bfz\stackrel{\varphi}{\rightarrow} \bfy$ represents $\bfz\rightarrow \bfy, \varphi(\bfz)\rightarrow \varphi(\bfy)$. Clearly, $\Omega({\bfy})\subseteq\Theta$. On the other hand, $\Theta \subseteq \Omega({\bfy})$  follows from that $\Omega(\bfz^k)\subseteq \Omega(\bfy)$ for any $\bfz^k\stackrel{\varphi}{\rightarrow} \bfy$ and $\Omega(\cdot)$ is closed.
 
 ii) Direct verifications yield the following chain of equations,
 \begin{eqnarray*}  \arraycolsep=1.4pt\def\arraystretch{1.5}
  \begin{array}{rclll}
 \partial^\infty \|\bfy_+\|_0&=& {\limsup}_{\sigma\downarrow0,~{\bfz}\stackrel{\varphi}{\rightarrow} {\bfy}} \sigma \widehat{\partial}\|\bfz_+\|_0&~~( \text{by \cite[Equation 8(5)]{RW1998}})\\ 
&=&
 {\limsup}_{{\bfz}\stackrel{\varphi}{\rightarrow} {\bfy}}~ \widehat{\partial}\|\bfz_+\|_0&~~( \text{by \eqref{subfi-0}})\\
 & =& \partial\|{\bfy}_+\|_0,&~~( \text{by \eqref{sub-lim-reg}})\\
 \end{array}
 \end{eqnarray*}
 where $ \partial^\infty \|{\bfy}_+\|_0$ is the horizon subdifferential of $\|{\bfy}_+\|_0$. Therefore, we derive that $\widehat{\partial}\varphi=\partial \varphi=\partial^\infty \varphi$.  One can easily prove that the horizon cone $\widehat{\partial}\varphi(\bfy)^\infty$ (see \cite[Definition 3.3]{RW1998}) of $ \widehat{\partial}\varphi(\bfy)$ satisfies $\widehat{\partial}\varphi(\bfy)^\infty = \partial^ \infty\varphi(\bfy)$. These conditions indicate that  the function $\varphi$ is regular by \cite[Corollary 8.11]{RW1998}, which together with \eqref{chain-rule-cond}, $g(\bfx)=\varphi(h(\bfx))$ and \cite[Theorem 10.6]{RW1998} derives \eqref{chain-rule-sub} immediately. 
\end{proof} 

The assumption in (\ref{chain-rule-cond}) can be regarded as a constraint qualification for the chain rule in (\ref{chain-rule-sub}) to hold. We finish this section with a formula to compute the proximal operator of  $\|(\cdot)_+\|_0$.  Let $\alpha>0$, the proximal operator of $\alpha \|(\cdot)_+\|_0$ at  {$\bfo$ is defined by 
\begin{eqnarray*} 
\prox(\bfo)=\underset{\bfy\in\R^m}{\rm argmin} ~\frac{1}{2}\|\bfy-\bfo\|^2+ \alpha \|\bfy_+\|_0.
\end{eqnarray*}
As shown in \cite[Lemma 2.2]{wang2019support}, the proximal operator admits a closed form as
\begin{eqnarray} \label{proximal-operator}
\Big[\prox(\bfo)\Big]_i=
\begin{cases}
0,&    \nu_i \in (0, \sqrt{2\alpha}),\\
0~ \text{or}~ \nu_i ,& \nu_i\in\{0,\sqrt{2\alpha}\},\\
\nu_i,&\nu_i\in(-\infty,0)\cup (\sqrt{2\alpha},\infty).
\end{cases} 
\end{eqnarray}}
\section{Optimality Conditions} \label{sec:opt}

In this section, we study the optimality conditions of \eqref{ell-0-1}  and characterize the conditions in terms of Proximal-stationarity  {(i.e., P-stationarity) using the proximal operator.}  Those results will lay down the foundation for Newton's method in the next section. For a given point $\bfx^*\in\R^n$, we denote 
\begin{eqnarray}\label{T-*}
  \Gamma_*&:=&\{i\in\N_m: A_i\bfx^*+b_i=0\}.
\end{eqnarray}
Our first result is to characterize a local minimizer of  \eqref{ell-0-1}.

\begin{lemma}\label{first-order-nec}
	The following relationships hold for the problem (\ref{ell-0-1}).
\begin{itemize}
	
\item[i)] A local minimizer $\bfx^*$   satisfies the following condition 
         if  $A_{\Gamma_*}$ is full row rank,
\begin{eqnarray} \label{lemma-1st-order-nec}
    -\nabla f(\bfx^*) \in A^\top \partial \|(A\bfx^*+\bfb)_+\|_0.
\end{eqnarray}
 
\item[ii)] A point $\bfx^*$ satisfying (\ref{lemma-1st-order-nec})  is a local minimizer if  the function $f$ is locally convex around $\bfx^*$.
\end{itemize}
\end{lemma}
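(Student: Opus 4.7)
My proof will proceed by routine subdifferential calculus. I will first apply Fermat's rule at the local minimizer $\bfx^*$ of $f + \lambda g$, where $g(\bfx) := \|(A\bfx+\bfb)_+\|_0$, to obtain $0 \in \partial(f + \lambda g)(\bfx^*)$, and then invoke the smooth-plus-lsc sum rule to pull $\nabla f(\bfx^*)$ out of the subdifferential. To convert $\partial g(\bfx^*)$ into the desired form I will invoke \Cref{subdifferentials}(ii) with $h(\bfx) = A\bfx + \bfb$; the only thing to check is the constraint qualification (\ref{chain-rule-cond}), which here reduces to asking that $\bfz \geq 0$ and $A_{\Gamma_*}^\top \bfz = 0$ together force $\bfz = 0$. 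This is immediate once $A_{\Gamma_*}$ has full row rank, because its transpose is then injective. The resulting inclusion reads $0 \in \nabla f(\bfx^*) + \lambda A^\top \partial\|(A\bfx^* + \bfb)_+\|_0$, and the factor $\lambda > 0$ will be absorbed because the right-hand set is a cone by (\ref{subfi}).

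\textbf{Plan for (ii).} I will start from (\ref{lemma-1st-order-nec}) by extracting some $\bfv^* \in \partial\|(A\bfx^*+\bfb)_+\|_0$ with $-\nabla f(\bfx^*) = A^\top \bfv^*$. By (\ref{subfi}), $\bfv^*$ is supported on $\Gamma_*$ and nonnegative there. I will then choose $\delta > 0$ small enough so that on $N(\bfx^*,\delta)$: (a) $f$ is convex; (b) the sign of $A_i\bfx + b_i$ is preserved for every $i \notin \Gamma_*$; and (c) $v^*_i(A_i\bfx + b_i) < \lambda$ for every $i \in \Gamma_*$. On such a neighborhood the penalty difference collapses to $\sum_{i \in \Gamma_*}\ell_{0/1}(A_i\bfx + b_i)$, since indices outside $\Gamma_*$ contribute identically, while local convexity together with $A_i\bfx^* + b_i = 0$ on $\Gamma_*$ yields $f(\bfx) - f(\bfx^*) \geq -\sum_{i\in\Gamma_*} v^*_i(A_i\bfx + b_i)$. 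I will then split the sum over $\Gamma_*$ according to the sign of $A_i\bfx + b_i$: indices with $A_i\bfx + b_i \leq 0$ are handled by the nonnegativity of $v^*_i$, while indices with $A_i\bfx + b_i > 0$ are handled by (c), where the indicator jump $\lambda$ dominates the linear term.

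\textbf{Main obstacle.} The only delicate point is this last case split. The $0/1$ penalty contributes a fixed jump of $\lambda$ whenever $A_i\bfx + b_i$ crosses zero for some $i \in \Gamma_*$, and local optimality ultimately rests on this jump beating the first-order linear error arising from $-v^*_i(A_i\bfx + b_i)$; this is what forces the neighborhood radius to depend on $\lambda$, $\|\bfv^*\|_\infty$, and $\|A_{\Gamma_*}\|$. The positivity of $\lambda$ is not cosmetic but essential, since otherwise the combinatorial jump could dwarf the smooth control afforded by convexity of $f$. All other steps — the sum and chain rules in (i), and the reduction of the penalty to a sum over $\Gamma_*$ in (ii) — are bookkeeping once the correct neighborhood has been chosen.
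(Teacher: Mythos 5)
Your proof is correct. Part (i) coincides with the paper's argument: Fermat's rule plus the smooth-plus-lsc sum rule, the chain rule of \Cref{subdifferentials}(ii) with the qualification \eqref{chain-rule-cond} discharged by injectivity of $A_{\Gamma_*}^\top$, and absorption of $\lambda$ via the cone property visible in \eqref{subfi}. Part (ii), however, reaches the conclusion by a genuinely different decomposition. The paper lifts to the equivalent constrained problem \eqref{ell-0-1-equ-0} in the variables $(\bfx;\bfy)$, takes the radius $\min\{\delta_1,\delta_2\}$ of \eqref{delta-2}, and argues by a dichotomy on the aggregate count $\|\bfy_+\|_0$: if it equals $\|\bfy^*_+\|_0$, then $\bfy_{\Gamma_*}\le 0$ and the linear term $I$ of \eqref{fx-fx*} is nonnegative by the sign of $\bfz^*_{\Gamma_*}$; if it increases, the jump of at least $\lambda$ beats the uniform bound $I\ge-\|A_{\Gamma_*}^\top\bfz^*_{\Gamma_*}\|\,\delta_1=-\lambda$. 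You instead remain in $\bfx$-space and show the stronger per-index statement that the objective difference dominates $\sum_{i\in\Gamma_*}\bigl[\lambda\,\ell_{0/1}(A_i\bfx+b_i)-v^*_i(A_i\bfx+b_i)\bigr]$, every term of which is nonnegative: indices with $A_i\bfx+b_i\le 0$ by $v^*_i\ge 0$, indices with $A_i\bfx+b_i>0$ by your condition (c). This eliminates both the auxiliary reformulation and the two-case count comparison, and it isolates exactly what the neighborhood radius must accomplish, namely keep $v^*_i(A_i\bfx+b_i)$ below $\lambda$ on $\Gamma_*$ and preserve signs off $\Gamma_*$. What the paper's heavier bookkeeping buys is continuity with the rest of the paper: the lifted pair $(\bfx;\bfy)$ and the explicit thresholds $\delta_1,\delta_2$ prefigure the variables $\bfw=(\bfx;\bfz)$ and the constants $\tau_1,\tau_2$ of \eqref{tau-*} in \Cref{Stationary-Thm}, so its proof doubles as a template for the P-stationarity analysis, whereas yours is the more self-contained and economical argument for the lemma itself.
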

\begin{proof}
i) It follows from \cite[Theorem 10.1]{RW1998} that a local minimizer of \eqref{ell-0-1} must satisfy $-\nabla f(\bfx^*)\in\lambda\partial g(\bfx^*)$, where $g(\bfx):=\|(A\bfx+\bfb)_+\|_0$. This together with \cref{chain-rule-sub}  and $\lambda \partial\|(\cdot)_+\|_0 = \partial \|(\cdot)_+\|_0$ by \eqref{subfi} derives the result immediately.  

ii)   Since the problem \eqref{ell-0-1} is equivalent to the following problem, 
\begin{eqnarray}\label{ell-0-1-equ-0}
&\underset{\bfx\in\R^n,\bfy\in\R^m}{\min}& f(\bfx)+\lambda \|\bfy_+\|_0,\\
  &{\rm s.t.}& A\bfx+\bfb-\bfy = 0,\nonumber
\end{eqnarray}
it suffices to show that  $(\bfx^*;\bfy^*)$  is a local minimizer of the   problem \eqref{ell-0-1-equ-0}, where $\bfx^*$ satisfies \eqref{lemma-1st-order-nec} and $\bfy^*=A \bfx^*+\bfb$, namely, there is a $\bfz^*$ such that
\begin{eqnarray}
\label{lemma-1st-order-nec-10}
\nabla f(\bfx^*) +A^\top\bfz^*=0,~~ 
A \bfx^*+\bfb-\bfy^*=0, ~~
\partial\|\bfy^*_+\|_0\ni \bfz^*. 
\end{eqnarray} 
It follows from $\bfy^*=A\bfx^*+\bfb$ and \eqref{T-*} that $\Gamma_*=\{i\in\N_m: y^*_i=0\}$. This together with $\partial\|\bfy^*_+\|_0 \ni \bfz^*$ and the expression of the $\partial\|\bfy^*_+\|_0$ in  \eqref{subfi} indicates 
\begin{eqnarray}\label{fact-11}
\bfy_{\Gamma_*}^*=0,~~\bfz_{\Gamma_*}^* \geq 0, ~~~~\bfy_{\overline \Gamma_*}^*\neq 0,~ ~\bfz_{\overline \Gamma_*}^* = 0. 
\end{eqnarray} 
 Define a radius $\delta:=\min\{\delta_1,\delta_2\}$, where
\begin{eqnarray}\label{delta-2}
~~~~~~\delta_1 := 
\begin{cases}
+\infty,&A_{\Gamma_*}^\top\bfz^* _{\Gamma_*}=0\\
\frac{\lambda}{ \|A_{\Gamma_*}^\top\bfz^* _{\Gamma_*}\|},& \text{otherwise}, 
\end{cases}~~~~
\delta_2 := 
\begin{cases}
+\infty,&\bfy^*\leq {\bf0},\\
\min_i\{y_i^*:y_i^*>0\},&\text{otherwise}, 
\end{cases}
\end{eqnarray}
and consider a local region of ${\bfw}^* := (\bfx^*;\bfy^*)$ by
\begin{eqnarray}\label{local-area-1}
~~N( {\bfw}^*,\delta)=\left\{ \left(
\bfx; 
  {\bfy}\right)\in\R^{n+m}   
:~A\bfx+\bfb - \bfy=0, ~
 \|{\bfw}- {\bfw}^*\|<\delta
\right\}.
\end{eqnarray}
Indeed, $N( {\bfw}^*,\delta)$ is a neighbourhood of ${\bfw}^*$ since $A\bfx^*+\bfb - \bfy^*=0$ from \eqref{lemma-1st-order-nec-10}. Next we show that, for any $\bfw\in N( {\bfw}^*,\delta)$,
\begin{eqnarray}\label{y-y*-0}
\|\bfy^*_+\|_0\leq \|\bfy_+\|_0.
\end{eqnarray}
Obviously, it is true if $\bfy^*\leq0$ as $\|\bfy^*_+\|_0=0$. For $\bfy^*\nleq0$,  to guarantee \eqref{y-y*-0}, it suffices to show that for any $i$, $y_i^*>0~\Longrightarrow~y_i>0.$ 
Suppose there is a $j\in\N_m$ such that $y_j^*>0$ but $y_j\leq0$.  This incurs the following contradiction 
\begin{eqnarray*}\delta_2 \geq \delta
& {>}&\|\bfw-\bfw^*\| \hspace{3.5cm} ({\rm by}~ \eqref{local-area-1})\\
&\geq& |y_j-y_j^*|=y_j^*-y_j  \geq y_j^*\geq\delta_2.\hspace{.5cm} ({\rm by}~ \eqref{delta-2}) \end{eqnarray*}
Again, for any $\bfw\in N( {\bfw}^*,\delta)$, we have $A\bfx+\bfb - \bfy=0$, which and \eqref{lemma-1st-order-nec-10} generate
 \begin{eqnarray}\label{y-y*-h-h*}
 \bfy-\bfy^*=A (\bfx- \bfx^*).
  \end{eqnarray}
Next, the convexity of $f$ gives rise to  
 \begin{eqnarray}\label{fx-fx*}
\hspace{15mm} f(\bfx)-f(\bfx^*)&\geq & \langle \nabla f(\bfx^*), \bfx - \bfx^*\rangle~~= -\langle  A^\top \bfz^* , \bfx - \bfx^*\rangle \hspace{1.6cm} ({\rm by}~ \eqref{lemma-1st-order-nec-10})\nonumber\\
 &=& -\langle  A_{\Gamma_*}^\top\bfz^* _{\Gamma_*},\bfx - \bfx^*\rangle= -\langle  A_{\Gamma_*}(\bfx - \bfx^*),\bfz^* _{\Gamma_*}\rangle=:\phi\hspace{0.3cm} ({\rm by}~ \eqref{fact-11})
  \end{eqnarray} 
 Now we make the conclusion by two cases.  
 If $\|\bfy^*_+\|_0= \|\bfy_+\|_0$, then   $\bfy_{\Gamma_*}\leq 0$ due to $\bfy^*_{\Gamma_*}=0$. This and \eqref{y-y*-h-h*} yield that
  \begin{eqnarray}\label{y-y*-h-h*-0}
0\geq  \bfy_{\Gamma_*}= \bfy_{\Gamma_*}-\bfy^*_{\Gamma_*}  = A_{\Gamma_*} (\bfx-\bfx^*),
  \end{eqnarray}
which together with $\bfz^* _{\Gamma_*}\geq0$ from \eqref{fact-11} indicates $\phi\geq0$, namely $f(\bfx)\geq f(\bfx^*)$. So $$f(\bfx) +\lambda \|\bfy_+\|_0 \geq f(\bfx^*) + \lambda\|\bfy_+^*\|_0.$$
If $\|\bfy^*_+\|_0 \neq \|\bfy_+\|_0$, we must have $ \|\bfy_+\|_0 \geq 1+\|\bfy^*_+\|_0$ by \eqref{y-y*-0}. If  $A_{\Gamma_*}^\top\bfz^* _{\Gamma_*}=0$, then $\phi=0>-\lambda$. Otherwise, it follows
  \begin{eqnarray*}
\phi&\geq&- \|A_{\Gamma_*}^\top\bfz^* _{\Gamma_*}\|\|\bfx - \bfx^*\| \geq - \|A_{\Gamma_*}^\top\bfz^* _{\Gamma_*}\|\|\bfw - \bfw^*\|\\
 & \geq&- \|A_{\Gamma_*}^\top\bfz^* _{\Gamma_*}\|\delta 
  \geq - \|A_{\Gamma_*}^\top\bfz^* _{\Gamma_*}\|\delta_1=-\lambda.\hspace{.5cm} ({\rm by}~ \eqref{delta-2})
  \end{eqnarray*}
Both lead to $\phi\geq-\lambda$, which results in
  \begin{eqnarray*}\arraycolsep=1.4pt\def\arraystretch{1.5}
  \begin{array}{rclll}
  f(\bfx) +\lambda \|\bfy_+\|_0 &\geq& f(\bfx^*) + \phi +\lambda \|\bfy_+\|_0 &\ ({\rm by}~ \eqref{fx-fx*} )\\
&\geq& f(\bfx^*) -\lambda + \lambda\|\bfy_+\|_0 &\ ({\rm by}~ \phi\geq-\lambda)\\
&\geq& f(\bfx^*) + \lambda\|\bfy_+^*\|_0.&\ ({\rm by}~ \|\bfy_+\|_0 \geq 1+\|\bfy^*_+\|_0 )
 \end{array} \end{eqnarray*}
Overall, the two cases show that $(\bfx^*;\bfy^*)$ is  a local minimizer to \eqref{ell-0-1-equ-0}. Namely, $\bfx^*$ is  a local minimizer to \eqref{ell-0-1}. 
\end{proof}


The characterization \eqref{lemma-1st-order-nec} is nice and it is in the
classic form of differential inclusion. 
However, the challenge is that it is difficult to extract second-order
information which is essential to Newton's method.
To this purpose, we continue to characterize it in terms of
P-stationarity.

\begin{definition}\label{def-p-sta} 
	A point $\bfx^*$ is called a P-stationary point of the problem (\ref{ell-0-1})  if there exist a constant $\tau > 0$ and
	a point $\bfz^*\in\R^m$  such that
\begin{eqnarray} \label{p-stationary}
 \begin{cases}
\hspace{21mm}\nabla f(\bfx^*)  + A^\top \bfz^* &=~ 0 \\
\pro (A\bfx^*+\bfb+\tau \bfz^*) &\ni~A\bfx^*+\bfb.
\end{cases}
\end{eqnarray}
\end{definition}

We also say a point $(\bfx^*;\bfz^*)$ is a P-stationary point of the problem (\ref{ell-0-1}) if it satisfies the conditions in \eqref{p-stationary}. 
For a point $\bfx^*$, we denote two constants by 
 \begin{eqnarray}
\label{tau-*} 
~~~~\tau_1 := 
\begin{cases}
+\infty, & \bfy^*\leq0,\\
{\min} \left\{\frac{(y^*_i)^2}{2\lambda}:y_i^*>0\right\},& {\rm otherwise},\\
\end{cases}~~
\tau_2:=\begin{cases}
+\infty, & \Gamma_*=\emptyset,\\
 \frac{2\lambda}{\max_i|p^*_i|^2},& {\rm otherwise},
\end{cases}
\end{eqnarray}
where $\bfy^*:=A\bfx^*+\bfb$ and $\bfp^*:=-(A_{\Gamma_*}A_{\Gamma_*}^{\top})^{-1}A_{\Gamma_*}\nabla f(  \bfx^*)$. Clearly, both $\tau_1>0$ and $\tau_2>0$. Based on these notation, we have the following main result of this section.
\begin{theorem} \label{Stationary-Thm}
   The following relationships hold for the problem (\ref{ell-0-1}).
\begin{itemize}
\item[i)]  A local minimizer $\bfx^*$ is a  P-stationary point for any $0<\tau<\tau_*:=\min\{\tau_1,\tau_2\}$ if  $A_{\Gamma_*}$ is full row rank. 
\item[ii)] A P-stationary point with $ \tau>0$  is a local minimizer if  the function $f$ is locally convex around $\bfx^*$. 
\item[iii)] A P-stationary point with $ \tau\geq\|A\|^2/c_f$  is   a global minimizer if  the function $f$ is strongly convex with a constant $c_f>0$.
\end{itemize}
\end{theorem}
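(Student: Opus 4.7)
The three parts are naturally tied together through the proximal formula in \eqref{proximal-operator} evaluated componentwise at the candidate multiplier $\bfz^*$, so my plan is to unpack what P-stationarity says one index at a time and then bridge to \Cref{first-order-nec}.

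For part (i), I would start from \Cref{first-order-nec}(i), which guarantees the existence of $\bfz^* \in \partial\|(A\bfx^*+\bfb)_+\|_0$ with $\nabla f(\bfx^*)+A^\top\bfz^*=0$. The subdifferential formula \eqref{subfi} forces $\bfz^*_{\overline{\Gamma_*}}=0$ and $\bfz^*_{\Gamma_*}\ge 0$, so the equation reduces to $A_{\Gamma_*}^\top\bfz^*_{\Gamma_*}=-\nabla f(\bfx^*)$. The full row rank of $A_{\Gamma_*}$ makes $A_{\Gamma_*}^\top$ injective, which pins $\bfz^*_{\Gamma_*}=\bfp^*$ exactly (and in particular $\bfp^*\ge 0$). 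With $\bfz^*$ identified, it remains to check the proximal inclusion in \eqref{p-stationary} via \eqref{proximal-operator} at each coordinate $u_i=y^*_i+\tau z^*_i$: for $i\in\Gamma_*$ we have $u_i=\tau p^*_i$ and need $0\in\mathrm{Prox}(\cdot)_i$, which reduces to $0\le\tau p^*_i\le\sqrt{2\tau\lambda}$ and is precisely $\tau\le\tau_2$; for $i\notin\Gamma_*$, $u_i=y^*_i\neq 0$, so we need $u_i\le 0$ or $u_i\ge\sqrt{2\tau\lambda}$, which reduces to $\tau\le(y^*_i)^2/(2\lambda)$ for each $y^*_i>0$, i.e.\ $\tau\le\tau_1$.

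For part (ii), I would run the case analysis of \eqref{proximal-operator} in reverse. The second line of \eqref{p-stationary} says componentwise that either $y^*_i\ne 0$ (so necessarily $\tau z^*_i=0$, hence $z^*_i=0$) or $y^*_i=0$ (in which case $\tau z^*_i\in[0,\sqrt{2\tau\lambda}]$, hence $z^*_i\ge 0$). This exactly matches \eqref{subfi}, so $\bfz^*\in\partial\|(A\bfx^*+\bfb)_+\|_0$, and the first line of \eqref{p-stationary} then gives the inclusion \eqref{lemma-1st-order-nec}. \Cref{first-order-nec}(ii) finishes the argument.

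For part (iii), the key is a one-line Moreau-type inequality coming from the fact that $\bfy^*:=A\bfx^*+\bfb$ is a global minimizer of the proximal subproblem at $\bfy^*+\tau\bfz^*$: expanding that inequality at any competitor $\bfy=A\bfx+\bfb$ and simplifying yields
\begin{equation*}
\langle \bfz^*,\bfy-\bfy^*\rangle \;\le\; \frac{1}{2\tau}\|\bfy-\bfy^*\|^2 + \lambda\bigl(\|\bfy_+\|_0-\|\bfy^*_+\|_0\bigr)
\;\le\; \frac{\|A\|^2}{2\tau}\|\bfx-\bfx^*\|^2 + \lambda\bigl(\|\bfy_+\|_0-\|\bfy^*_+\|_0\bigr).
\end{equation*}
Combining this with strong convexity of $f$ and $\nabla f(\bfx^*)=-A^\top\bfz^*$ gives
\begin{equation*}
f(\bfx)+\lambda\|\bfy_+\|_0 \;\ge\; f(\bfx^*)+\lambda\|\bfy^*_+\|_0 + \tfrac12\bigl(c_f-\|A\|^2/\tau\bigr)\|\bfx-\bfx^*\|^2,
\end{equation*}
which is non-negative exactly when $\tau\ge\|A\|^2/c_f$. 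The main obstacle I foresee is the bookkeeping in part (i): one has to argue that the multiplier produced by \Cref{first-order-nec}(i) is forced to coincide with the closed-form expression $\bfp^*$ used to define $\tau_2$, which in turn relies on the full row rank assumption to make the restricted system $A_{\Gamma_*}^\top\bfz^*_{\Gamma_*}=-\nabla f(\bfx^*)$ have a unique solution.
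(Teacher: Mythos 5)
Your proposal is correct and follows essentially the same route as the paper's own proof: part (i) bridges through \cref{first-order-nec}(i), pins $\bfz^*_{\Gamma_*}=\bfp^*$ via the full row rank of $A_{\Gamma_*}$, and verifies the proximal inclusion componentwise against the thresholds defining $\tau_1,\tau_2$; part (ii) reverses the case analysis of \eqref{proximal-operator} to recover $\bfz^*\in\partial\|(A\bfx^*+\bfb)_+\|_0$ and invokes \cref{first-order-nec}(ii); and part (iii) expands the proximal minimization property into exactly the paper's inequality \eqref{Ax*b-Axb} before combining it with strong convexity. The only cosmetic difference is that you state the componentwise conditions with non-strict inequalities ($\tau\le\tau_1,\tau_2$), which is harmless since the theorem assumes the strict bound $\tau<\tau_*$.
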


\begin{proof} i) As  $\bfx^*$ is a local minimizer of \eqref{ell-0-1},  condition  \eqref{lemma-1st-order-nec} is valid by \cref{first-order-nec} if $A_{\Gamma_*}$ is  full row rank. In other words, there is a $\bfz^*$ such that
\begin{eqnarray}
\label{lemma-1st-order-nec-1}
\nabla f(\bfx^*) +A^\top\bfz^*=0,~~
A \bfx^*+\bfb-\bfy^*=0,~~
\bfz^*\in\partial\|\bfy^*_+\|_0. 
\end{eqnarray} 
Therefore, to show \eqref{p-stationary}, we only need to verify that,  for any $0<\tau<\tau^*$, $$\bfz^*\in \partial\|\bfy^*_+\|_0~ \Rightarrow~\bfy^*\in\P :=\pro (\bfy^*+\tau \bfz^*). $$ 
Recall the definition of $\Gamma_*$ in \eqref{T-*} and the second condition in \eqref{lemma-1st-order-nec-1}, we have $\bfy_{\Gamma_*}=(A \bfx^*+\bfb)_{\Gamma_*}=0$. Same reasoning also allows for obtaining \eqref{fact-11} due to $\bfz^*\in \partial\|\bfy^*_+\|_0$. 
As $A_{\Gamma_*}$ is full row rank, the first condition in \eqref{lemma-1st-order-nec-1} and  $A^\top\bfz^*  =A_{\Gamma_*}^\top\bfz^*_{\Gamma_*}$ derive  that
$$\bfz^*_{\Gamma_*}=\bfp^*.$$
Now, $0<\tau<\tau_*= \min\{\tau_1,\tau_2\}$ in \eqref{tau-*} results in 
\begin{eqnarray}\label{non-negative}
\arraycolsep=1.4pt\def\arraystretch{1.5}
\begin{array}{lllllllllll}
 y_i^*  & \geq & \underset{i:y^*_i>0}{\min} y^*_i &=&  \sqrt{2\tau_1\lambda}&\geq&\sqrt{2\tau_*\lambda} &>&\sqrt{2\tau \lambda}  &~~\text{if}~~& y_i^*>0,\\ 
   z_i^* & \leq &  {\underset{i}{\max}|p^*_i|} &= &\sqrt{2\lambda/\tau_2}&\leq&\sqrt{2\lambda/\tau_*}&< &\sqrt{2\lambda/\tau}& ~~ \text{if}~~ &z_i^*>0.
\end{array}
\end{eqnarray}
These and \eqref{fact-11}  yield the following condition,
\begin{eqnarray*} 
y_i^*\begin{cases}
=0, &   z_i^*=0 ~{\rm or}~ 0< z_i^* < \sqrt{2\lambda/\tau},\\
<0 ~{\rm or}~>\sqrt{2\tau\lambda },& z_i^*=0. 
\end{cases}
\end{eqnarray*}
  {It is easy to see that the above condition satisfies that 
\begin{eqnarray}\label{ugTT-1}
y_i^*  \in \P_i=
\begin{cases}
0, & y_i^*+\tau z_i^*\in(0, \sqrt{2\tau\lambda}),\\
0~{\rm or}~ y_i^*+\tau z_i^*, &y_i^*+\tau z_i^*\in\{0, \sqrt{2\tau\lambda}\},\\
y_i^*+\tau z_i^*,& y_i^*+\tau z_i^*\in(-\infty,0)\cup ( \sqrt{2\tau\lambda},\infty).
\end{cases}
\end{eqnarray}}

ii) Note that the second condition in \eqref{p-stationary} means $\bfy^*\in\P$, which by   \eqref{ugTT-1} implies $z_i^*=0$ if $y_i^*\neq 0$ and  $z_i^*\geq0$ if $y_i^*= 0$, resulting in $\bfz^*\in\partial\|\bfy^*_+\|_0$ by \eqref{subfi}. Consequently, we obtain \eqref{lemma-1st-order-nec-1} and \eqref{lemma-1st-order-nec}.
The claim follows from  \cref{first-order-nec} ii).

iii) Let $(\bfx^*;\bfz^*)$ be a  P-stationary point with $ \tau\geq\|A\|^2/c_f$. Then the second condition in \eqref{p-stationary} indicates that
\beq
&&({1}/{2\tau})\|A\bfx^*+\bfb-(A\bfx^*+\bfb+\tau \bfz^*)\|^2+\lambda\|(A\bfx^*+\bfb)_+\|_0 \nonumber \\
&\leq& ({1}/{2\tau})\|A\bfx+\bfb-(A\bfx^*+\bfb+\tau \bfz^*)\|^2+\lambda\|(A\bfx+\bfb)_+\|_0\nonumber
\eeq
for any $\bfx\in\R^n$, which after simplifying leads to 
\beq\label{Ax*b-Axb}
&& \lambda\|(A\bfx^*+\bfb)_+\|_0 - \|A(\bfx-\bfx^*)\|^2 /(2\tau) \nonumber\\
&\leq& \lambda\|(A\bfx+\bfb)_+\|_0 - \langle  \bfz^*, A(\bfx-\bfx^*) \rangle\nonumber\\
&=& \lambda\|(A\bfx+\bfb)_+\|_0 - \langle  A^\top \bfz^*, \bfx-\bfx^*    \rangle\nonumber\\
&=& \lambda\|(A\bfx+\bfb)_+\|_0 + \langle  \nabla f(\bfx^*), \bfx-\bfx^*  \rangle. \hspace{.5cm} ({\rm by}~  \eqref{p-stationary}) 
\eeq
The strong convexity of $f$ implies
\begin{eqnarray*} 
&&f(\bfx)+\lambda\|(A\bfx+\bfb)_+\|_0-f(\bfx^*)-\lambda\|(A\bfx^*+\bfb)_+\|_0\\
&\geq&({c_f}/{2})\|\bfx-\bfx^*\|^2+\langle  \nabla f(\bfx^*), \bfx-\bfx^*   \rangle+\lambda\|(A\bfx+\bfb)_+\|_0-\lambda\|(A\bfx^*+\bfb)_+\|_0\\
&\geq&({c_f}/{2})\|\bfx-\bfx^*\|^2-1/({2\tau})\|A(\bfx-\bfx^*)\|^2    \hspace{.25cm} ({\rm by}~  \eqref{Ax*b-Axb})\\
&\geq&({c_f/2-\|A\|^2/(2\tau}))\|\bfx-\bfx^*\|^2   \geq 0. \hspace{1cm} ({\rm by}~  \tau\geq\|A\|^2/c_f) 
\end{eqnarray*}
This shows the global optimality of $\bfx^*$ to the problem \eqref{ell-0-1}.
\end{proof}

\section{Smoothing Newton's Method}\label{sec:Newton}
 
The main purpose of this section is to formulate Newton's method and establish
its quadratic convergence. We first state two assumptions for this purpose.

\begin{assumption} \label{ass}
	 Suppose $f$ is twice continuously differentiable, $\nabla^2 f(\bfx^*)$ is {positive definite} and $A_{\Gamma_*}$ is full row rank, where $\Gamma_*$ is given by (\ref{T-*}).
 \end{assumption}
 
\begin{assumption} \label{ass2} 
Suppose $\nabla^2 f$ is locally Lipschitz continuous around $\bfx^*$ with a constant $L_*>0$, namely 
\begin{eqnarray*} \|\nabla^2 f(\bfx)-\nabla^2 f(\bfx')\|\leq L_* \|   \bfx -\bfx' \| \end{eqnarray*}
for any $\bfx$ and $\bfx'$ in the neighbourhood of $\bfx^*$.
 \end{assumption} 

\subsection{Stationary equations}
 For a point  $\bfw:=(\bfx; \bfz)$, we define the sets {
\begin{eqnarray}\label{p-stationary-T}
\arraycolsep=1.4pt\def\arraystretch{1.5}
 \begin{array}{rcll}
\S&:=& \left\{i\in\N_m:  A_i\bfx+b_i+\tau z_i   \in (0, \sqrt{2\tau\lambda}) \right\}, \\
 \E&:=& \left\{i\in\N_m: A_i\bfx+b_i +\tau z_i \in\{0,\sqrt{2\tau\lambda}\} \right\},\\
\O&:=& \left\{i\in\N_m:  A_i\bfx+b_i+ \tau z_i \in(-\infty,0)\cup ( \sqrt{2\tau\lambda},\infty) \right\},\\
\E^o&:=& \left\{i\in\N_m: A_i\bfx+b_i =0, \tau z_i   \in\{0,\sqrt{2\tau\lambda}\} \right\}, 
\end{array}
\end{eqnarray}}
for a given $\tau>0$. 
Obviously, $\E^o\subseteq \E.$ 
It is worth mentioning that all sets depend on $\bfw$. 
For simplicity, we drop their dependence whenever 
there is no confusion to be caused. 
Same rules are also applied into $\S_*,\E_*,\O_*$ and $\E_*^o$ for  $\bfw^*:=(\bfx^*; \bfz^*)$. 
A key step towards the Newton method is the construction of
the following system of equations.
For a given subset $\Gamma\subseteq\N_m$ and a scalar $\mu\geq0$, 
it follows the definitions in (\ref{F0}) and (\ref{Fmu}) that
\begin{eqnarray}
\label{sta-eq-1}
\hspace{8mm} F(\bfw; \Gamma) =\left[\begin{array}{rrl}
\nabla f(\bfx)  + A_{\Gamma}^\top \bfz_{\Gamma}  \\
A_{\Gamma} \bfx + \bfb_{\Gamma} \\
\bfz_{\overline \Gamma} 
\end{array}
\right] = 0, \quad 
\nabla F_{\mu}(\bfw;\Gamma) =\left[\begin{array}{ccc}
\nabla^2 f(\bfx)   & A_{\Gamma} ^\top &0\\
A_{\Gamma} &-\mu I&0 \\
0&0&I
\end{array}
\right].
\end{eqnarray} 
We note that the matrix $\nabla F_{\mu}$ is a slight perturbation of
the Jacobian matrix of $F$ and 
$\nabla F(\bfw;\Gamma)=\nabla F_{0}(\bfw;\Gamma)$,
The following result relates a P-stationary point to a system of equations. 

\begin{theorem}\label{cor-p-staionary}
A point $\bfw^*=(\bfx^*;\bfz^*)$ is a  P-stationary point with $\tau>0$ of the problem (\ref{ell-0-1}) if and only if   $F(\bfw^*; \Gamma_*)=0$ and $ \Gamma_*=(\S_* \cup \E^o_*)$, where $\Gamma_*$ is defined by (\ref{T-*}).
The Jacobian  $\nabla F(\bfw^*;\Gamma_*)$ is   non-singular if \cref{ass} holds.
\end{theorem}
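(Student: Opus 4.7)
My plan is to prove the two assertions separately. For the equivalence in part (i) I will parse the P-stationary conditions of \Cref{def-p-sta} componentwise through the explicit proximal formula \eqref{proximal-operator}; for the non-singularity in part (ii) I will use the block structure of the Jacobian and a Schur-complement reduction under \Cref{ass}.

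Starting from the forward direction of (i), set $\bfy^* := A\bfx^* + \bfb$. The first relation in \eqref{p-stationary} is $\nabla f(\bfx^*) + A^\top \bfz^* = 0$, and the second is $\pro(\bfy^* + \tau\bfz^*) \ni \bfy^*$. Applying \eqref{proximal-operator} componentwise, every index $i \in \N_m$ falls into exactly one of $\S_*$, $\E_*$, $\O_*$. The formula forces $y_i^* = 0$ on $\S_*$; on $\O_*$ it forces $y_i^* = y_i^* + \tau z_i^*$, hence $z_i^* = 0$ since $\tau>0$; and on $\E_*$ it allows either $y_i^* = 0$ or $z_i^* = 0$. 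The $\E_*$-indices where $y_i^* = 0$ are exactly $\E_*^o$, so the remaining $\E_*$-indices must satisfy $z_i^* = 0$. Consequently $y_i^* = 0$ on $\Gamma_* = \S_* \cup \E_*^o$ and $z_i^* = 0$ on $\overline{\Gamma_*}$. The three rows of $F(\bfw^*;\Gamma_*) = 0$ read off immediately: the second from $\bfy^*_{\Gamma_*} = 0$, the third from $\bfz^*_{\overline{\Gamma_*}} = 0$, and the first from combining the gradient equation with $A^\top \bfz^* = A_{\Gamma_*}^\top \bfz^*_{\Gamma_*}$. The reverse direction retraces the same bookkeeping: from $F(\bfw^*;\Gamma_*) = 0$, rows one and three rebuild $\nabla f(\bfx^*) + A^\top \bfz^* = 0$, row two gives $y_i^* = 0$ on $\Gamma_*$, and \eqref{proximal-operator} is then verified on each of $\S_*$, $\E_*$, $\O_*$ via either $y_i^*=0$ or the tautology $y_i^* = y_i^* + \tau z_i^*$ on indices where $z_i^* = 0$.

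For part (ii), the Jacobian in \eqref{sta-eq-1},
\[
\nabla F(\bfw^*;\Gamma_*) = \begin{pmatrix} \nabla^2 f(\bfx^*) & A_{\Gamma_*}^\top & 0 \\ A_{\Gamma_*} & 0 & 0 \\ 0 & 0 & I \end{pmatrix},
\]
is block-diagonal with identity in the $\bfz_{\overline{\Gamma_*}}$-block, so its non-singularity reduces to that of the upper-left $2 \times 2$ saddle-point block. Since $\nabla^2 f(\bfx^*)$ is invertible by \Cref{ass}, a Schur complement reduces the question further to non-singularity of $A_{\Gamma_*} \nabla^2 f(\bfx^*)^{-1} A_{\Gamma_*}^\top$; combining the full-row-rank of $A_{\Gamma_*}$ with positive-definiteness of $\nabla^2 f(\bfx^*)$ (the regime realised by the convex applications this paper targets) makes this Gram-type matrix positive definite, hence non-singular.

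The main obstacle will be the $\E_*$ analysis in part (i): the proximal operator is set-valued on this boundary set, and a careful case split is needed to isolate $\E_*^o$ as exactly those $\E_*$-indices on which the "zero branch" of \eqref{proximal-operator} is active, versus the complementary indices on which the "identity branch" must hold. A secondary technical concern is matching the literal wording of \Cref{ass}: bare non-singularity of $\nabla^2 f(\bfx^*)$ does not by itself guarantee that $A_{\Gamma_*}\nabla^2 f(\bfx^*)^{-1} A_{\Gamma_*}^\top$ is non-singular, so one must either invoke the positive-definiteness implicit in the convex setting or argue directly that any $(\bfu,\bfv)$ in the kernel of the saddle-point block forces $\bfu \in \ker A_{\Gamma_*}$ together with $\bfu^\top \nabla^2 f(\bfx^*) \bfu = 0$, and then close out with a definiteness hypothesis on $\ker A_{\Gamma_*}$.
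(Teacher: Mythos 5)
Your treatment of the equivalence is essentially the paper's own proof. The paper runs the same componentwise case analysis of \eqref{proximal-operator} over the partition $\S_*$, $\E_*$, $\O_*$, identifies $\E^o_*$ as exactly the $\E_*$-indices on which the zero branch is active (so that $\bfy^*=A\bfx^*+\bfb$ vanishes on $\S_*\cup\E^o_*$ and $\bfz^*$ vanishes on $\O_*\cup(\E_*\setminus\E^o_*)$), and then reads the three rows of $F(\bfw^*;\Gamma_*)=0$ off these facts, in both directions, just as you do. The only step you omit is the paper's closing reconciliation of $\S_*\cup\E^o_*$ with the set $\Gamma_*$ defined in \eqref{T-*}; since the theorem statement itself (re)defines $\Gamma_*:=\S_*\cup\E^o_*$, this omission is harmless.

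On the Jacobian claim you part ways with the paper, and your ``secondary technical concern'' is the real content there. The paper's entire proof of that claim is the sentence ``The second claim is obvious due to Assumption~\ref{ass}'': no Schur complement, no kernel computation. Your hesitation is justified, because full row rank of $A_{\Gamma_*}$ plus mere non-singularity of $\nabla^2 f(\bfx^*)$ does \emph{not} imply non-singularity of the saddle-point block: with $\nabla^2 f(\bfx^*)=\mathrm{diag}(1,-1)$ and $A_{\Gamma_*}=(1\;\;1)$ one gets $A_{\Gamma_*}\nabla^2 f(\bfx^*)^{-1}A_{\Gamma_*}^\top=0$, and the vector with $\bfu=(-1,1)^\top$, $\bfv=1$ lies in the kernel. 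What is actually needed (and sufficient, by the kernel argument you sketch) is full row rank of $A_{\Gamma_*}$ together with non-singularity, e.g.\ sign-definiteness, of the reduced Hessian $Z^\top\nabla^2 f(\bfx^*)Z$ where $Z$ spans $\ker A_{\Gamma_*}$; positive definiteness of $\nabla^2 f(\bfx^*)$, which holds in the paper's SVM application, is the cleanest such hypothesis. So your argument proves a (necessarily) strengthened version of the second claim, whereas the paper's version conceals this gap behind the word ``obvious''; the same glossed step reappears in Lemma~\ref{lemma-neighbour-2}, where $H(\Gamma)$ is declared non-singular on identical grounds. In short: part one of your proposal matches the paper; part two is more careful than the paper and correctly exposes that the statement, under the literal Assumption~\ref{ass}, needs a definiteness hypothesis to be provable.
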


\begin{proof} 
The second claim is obvious due to Assumption \ref{ass}.
 We only need to prove the first claim. 
We start with the sufficiency.
 The definitions  in \eqref{p-stationary-T} show the following relationships,
\beq\label{TSE}
\Gamma_*= \S_* \cup \E^o_*,~~~~\overline{\Gamma}_*=\O_*\cup(\E_* \setminus \E^o_*).
\eeq
We recall $\bfy^*=A\bfx^*+\bfb$ and $\P:=\text{Prox}_{\tau\lambda\|({\cdot})_{+}\|_{0}}( \bfy^*+\tau \bfz^*)$.
It follows that 
 \allowdisplaybreaks \begin{eqnarray*} 
 \P_i 
=
\begin{cases}
0, & i\in \S,\\
0~{\rm or}~y^*_i+\tau z^*_i, &i\in\E_*,\\
 y_i+\tau z^*_i,& i\in \O_*,
\end{cases} 
=
\begin{cases}
0, & i\in \S_*\subseteq \Gamma_*,\\
0~{\rm or}~\tau z^*_i, &i\in \E^o_* \subseteq \Gamma_*,\\
0~{\rm or}~ y^*_i, &i\in(\E_*\setminus \E^o_*) \subseteq \overline \Gamma_*,\\
y^*_i,& i\in \O_*\subseteq \overline \Gamma_*,
\end{cases} 
\end{eqnarray*}
where the first equation is by \eqref{proximal-operator} and second one is by \eqref{sta-eq-1} and \eqref{TSE}, which indicates $y^*_i \in \P_i$ due to $ \bfy^*_{\Gamma_*}=0$ from \eqref{sta-eq-1}. Moreover,   
the first and third equations in \eqref{sta-eq-1} suffice to $\nabla f(\bfx^*)  + A^\top \bfz^*  =  0$, showing \eqref{p-stationary}. Namely,  $ \bfw^*$ is a  P-stationary point. 

{Necessity.}  Let $ \bfw^*$ be a  P-stationary point satisfying \eqref{p-stationary} and $T_*:= \S_* \cup \E^o_*$.  Then $\overline{T}_*=\O_*\cup(\E_* \setminus \E^o_*)$. It follows from $y^*_i\in \P_i$ and \eqref{proximal-operator} that
\begin{eqnarray*} 
y^*_i\in 
\begin{cases}
0, & i\in \S_*,\\
0~{\rm or}~ y^*_i+\tau z^*_i, &i\in\E_*,\\
y^*_i+\tau z^*_i,& i\in \O_*,
\end{cases} =
\begin{cases}
0, & i\in \S_*,\\
0~{\rm or}~\tau z^*_i, &i\in\E^o_*,\\
0~{\rm or}~ y^*_i+\tau z^*_i, &i\in\E_*\setminus \E^o_*,\\
y^*_i+\tau z^*_i,& i\in \O_*,
\end{cases} 
\end{eqnarray*}
where the equality is by the definition  of $\E^o_*$, which together with $ y^*_i= 0 , i\in \E^o_*$ and $y^*_i\neq 0, i\in \E_*\setminus \E^o_*$ suffices to
\begin{eqnarray*} 
\begin{cases}
y^*_i=0, & i\in \S_*\cup\E^o_*=T_*,\\
y^*_i = y^*_i+\tau z^*_i, &i\in\E_*\setminus \E^o_*,\\
y^*_i= y^*_i+\tau z^*_i,& i\in \O_*,
\end{cases} ~~\Longleftrightarrow~~
\begin{cases}
y^*_i &=0, ~ i\in T_*,\\
 z^*_i&=0, ~i\in\overline{T}_*.
\end{cases} 
\end{eqnarray*}
This gives rise to the last two conditions $\bfy^*_{T_*}=0,~
\bfz^*_{\overline T_*}=0$  in \eqref{sta-eq-1}. 
Furthermore, the first condition in \eqref{p-stationary} and $\bfz^*_{\overline{T}_*}=0$ derive the first condition of \eqref{sta-eq-1}.
Overall, we have $F(\bfw^*; T_*)=0$. 
Now we show $T_*=\Gamma_*$. By  (\ref{T-*}) that $\Gamma_*=\{i\in\N_m: y^*_i=0\}$, it follows $T_*\subseteq \Gamma_*$. Suppose, there is a $j\in \Gamma_*$ but $j\notin T_*$, then
we have $y_j^*=z_j^*=0$ and thus $j\in \E^o_* \subseteq T_*$  by \eqref{p-stationary-T}, a contradiction. Therefore, $T_*=\Gamma_*$, finishing the proof. \end{proof}

\begin{remark}\label{rem-tau-lam}
It is interesting to note that \cref{cor-p-staionary}  suggests a threshold value for $\lambda$ to exclude the zero solutions when $b_i\neq0,\forall i\in \N_m.$  Suppose $\bfx^*=0$ is a P-stationary point. The second equation $A_{\Gamma_*} \bfx + \bfb_{\Gamma_*} =0$ in  $F(\bfw^*;\Gamma_*)=0$ indicates $\Gamma_*=\emptyset$ and thus $\bfz^*=0$.  These and \eqref{p-stationary-T} give rise to  $b_i\in(-\infty,0)\cup [ \sqrt{2\tau\lambda},\infty),\forall i\in \N_m.$ In real applications (e.g., SVM and 1-bit CS), there is at least one $i\in \N_m$ such that $b_i >0$, which results in $\lambda\leq  \min_i\{b_i^2: b_i>0\}/({2\tau})$.  Hence, to exclude the zero solutions for some real applications, we choose 
\begin{eqnarray}\label{lambda-lower-bound}
\lambda >  {\min_i\{b_i^2: b_i>0\}}/{(2\tau)}.
\end{eqnarray}
\end{remark}

\subsection{Algorithmic design}  

\cref{cor-p-staionary} lays the foundation for developing Newton's method,
which is to solve the stationarity equation in \eqref{sta-eq-1}.
Let $\bfw^k:=(\bfx^k; \bfz^k)$ be the current iterate.
We define $\S_k$ and $\E_k^o$ by \eqref{p-stationary-T} with $\bfw$ being replaced by $\bfw^k$ and let  
  \beq \label{T-k}
      T_k := \S_k\cup \E_k^o.  
 \eeq
Let $\bfd^k=(\bfu^k;\bfv^k)$ with $\bfu^k\in\R^n$ and $\bfv^k\in\R^{m}$. For 
such a defined $T_k$, a Newton direction $\bfd^k$ for the equation \eqref{sta-eq-1} solves the following linear equations:
 \[ 
 \nabla F(\bfw^k;T_k)~\bfd = - F(\bfw^k;T_k). 
 \] 
To improve the nonsingularity of the Jacobian matrix $\nabla F(\bfw^k;T_k)$,
we replace it with $\nabla F_{\mu_k}(\bfw^k;T_k)$. 
That is, at $\bfw^k$, we solve the equation:
 \beq \label{newton-dir-0}
 \nabla F_{\mu_k}(\bfw^k;T_k)~\bfd = - F(\bfw^k;T_k), 
 \eeq
where $\nabla F_{\mu_k}(\bfw^k;T_k)$ is  defined in \eqref{sta-eq-1}.
The Newton direction $\bfd^k$ satisfies 
  \beq \label{newton-dir} 
 \left[  
     \begin{array}{ccc}
     {\nabla^2  f(\bfx^k)}& A_{T_k}^\top&0\\
   A_{T_k}&-\mu_k I&0\\
    0&0&I
     \end{array}
     \right]  
     \left[  
     \begin{array}{l}
    \bfu^k\\
    \bfv^k_{T_k}\\
    \bfv^k_{\overline T_k}
     \end{array}
     \right]
     =-\left[  
     \begin{array}{r}
    \nabla  f(\bfx^k)+A_{T_k}^\top\bfz^k_{T_k}\\
   A_{T_k}\bfx^k+\bfb_{T_k} \\
    \bfz^k_{\overline T_k}
     \end{array}
     \right].
 \eeq
Here, the rule to update $\mu_k$ is as follows:
   \beq \label{update-mu} 
\mu_{k}=
\min\{\alpha \mu_{k-1}, \rho\|F(\bfw^k;T_k)\|\},
 \eeq
 where $\alpha\in(0,1)$ and $\rho>0$. Now we summarize   the proposed method in  \cref{Alg-NM01}. 
\begin{algorithm}[H]
	\caption{\nhs: Newton's method for $0/1$-loss optimization}
	\begin{algorithmic}[1] \label{Alg-NM01}
		\STATE Initialize $\bfw^0=(\bfx^0;\bfz^0)$ and $\mu_{-1}>0$. Set the parameter $\tau,\lambda,\rho>0, \alpha\in(0,1)$.\\
		 Compute $T_0$ by \eqref{T-k} and set $k :=0$.
\IF{ $\|F(\bfw^k;T_k)\|>0$}		 
         \STATE  Update $\mu_{k}$ by \eqref{update-mu}.	
		\STATE  Update $\bfd^k$ by solving \eqref{newton-dir}.
		\STATE  Update $\bfw^{k+1}~=~\bfw^k+\bfd^k$.
		\STATE  Update $T_{k+1}$ by \eqref{T-k} and  set $k := k+1$.   		
\ENDIF		
\RETURN $\bfw^{k}$.
	\end{algorithmic}
\end{algorithm}

\begin{remark} 
In general,  the computational complexity for solving the equation \eqref{newton-dir} is approximately $O(n^2\max\{n,|T_k|\})$.  This is fine for small-sized problems.  When $n$ is large, the computational cost is too high and existing first-order algorithms would be faster. Fortunately, for many real applications, such as SVM and 1-bit CS, their functions are separable and have block structures such as in \eqref{Block-f}.  This implies that the Hessian matrix $  {\nabla^2  f(\bfx^k)}$ is of diagonal blocks and is invertible.  The worst-case computational complexity can be reduced to $O(|T_k|^2\max\{n,|T_k|\})$.   For SVM or 1-bit CS problems, $T_k$ coincides with the indices of incorrectly classified samples that take a relatively small portion of the total samples. Hence, $|T_k|$  can be on a small scale and computation of the Newton direction can be very cheap. 
\end{remark}

\subsection{Quadratic convergence}

Let us first explain why it is a challenging task to establish the quadratic convergence of the proposed Newton method. Suppose $\bfw^*$ satisfies the stationarity equation $F(\bfw^*; \Gamma_*) =0$ (see \cref{cor-p-staionary}). If we know $\Gamma_*$ beforehand, then by fixing $T_k=\Gamma_*$,  our proposed method reduces to the standard Newton's method that solves equations with smooth functions.  The quadratic convergence follows under \cref{ass} and \cref{ass2}.  However, the difficulty we are facing is that the set $T_k$  may change from iteration to iteration.  A different $T_k$ leads to a different system  of equations $F(\bfw;T_k)=0$. Hence, in each step, the algorithm finds a Newton direction for a different system of equations instead of a fixed system.  This is where the standard proof for quadratic convergence breaks down. As we will see below, it takes a great deal of effort in establishing quadratic convergence.

The first technical result is about extending the stationarity equation to some indices that are given in a neighborhood of $\bfw^*$. In the proof, we recall $\bfy = A\bfx+\bfb$ and $\bfy^* = A\bfx^*+\bfb$. 

\begin{lemma}\label{lemma-neighbour-1}
	 Let $\bfw^*$ be a \ts\  with $0<\tau<\tau_*:=\min\{\tau_1,\tau_2\}$ of the problem (\ref{ell-0-1}),  $\tau_1,\tau_2$ and $\Gamma_*$  be given by (\ref{tau-*}) and (\ref{T-*}).  Then there is a $\delta_1^*>0$ such that, for any $\bfw\in N(\bfw^*,\delta_1^*)$ with its associated indices $\S$ and $\E^o$, it holds 
 \beq\label{gw*-0} 
 F(\bfw^*;T)=0 ~~~~{\rm and}~~~T:=(\S\cup\E^o)\subseteq \Gamma_*.
 \eeq
\end{lemma}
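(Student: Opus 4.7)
The plan is to exploit two facts we already have at $\bfw^*$: by \cref{cor-p-staionary}, $F(\bfw^*;\Gamma_*)=0$ with $\Gamma_*=\S_*\cup\E_*^o$, and by the strict inequalities \eqref{non-negative} (which hold whenever $\tau<\tau_*$) every index $i\in\overline{\Gamma}_*$ sits \emph{strictly} inside $\O_*$, away from the boundary that defines $\S$ and $\E^o$. Continuity of the map $\bfw\mapsto (\bfy+\tau\bfz,\bfy)$ then propagates these strict separations to a whole neighbourhood of $\bfw^*$, forcing the perturbed set $T=\S\cup\E^o$ to sit inside $\Gamma_*$. What needs more care is not this inclusion but the first half of \eqref{gw*-0}, namely that evaluating $F$ at the fixed point $\bfw^*$ but with the \emph{perturbed} index set $T$ still returns zero.

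First I would unpack \eqref{non-negative} into three separating strict gaps at $\bfw^*$: (a) for $i\in\overline{\Gamma}_*$ with $y_i^*>0$, one has $y_i^*>\sqrt{2\tau\lambda}$, so $|y_i^*+\tau z_i^*-\sqrt{\tau\lambda/2}|>\sqrt{\tau\lambda/2}$ by a definite margin; (b) for $i\in\overline{\Gamma}_*$ with $y_i^*<0$ (and $z_i^*=0$), the quantity $y_i^*-\sqrt{\tau\lambda/2}$ is bounded strictly above by $-\sqrt{\tau\lambda/2}$; (c) for $i\in\Gamma_*$ with $z_i^*>0$, one has $\tau z_i^*<\sqrt{2\tau\lambda}$, so $i$ lies strictly inside $\S_*$. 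Taking $\delta_1^*$ smaller than the minimum of all three margins (scaled by $\|A\|$ so that the perturbation in $\bfy=A\bfx+\bfb$ is controlled), I can guarantee that for every $\bfw\in N(\bfw^*,\delta_1^*)$ the indices of type (a), (b) remain in $\O$ (hence outside $T$), while those of type (c) remain in $\S$ (hence inside $T$). The only indices whose membership in $T$ is uncertain are those of type (d): $i\in\Gamma_*$ with $z_i^*=0$, which may land in $\S$, $\E^o$, or $\O$ under perturbation. This gives $T\subseteq\Gamma_*$ and pins down $\Gamma_*\setminus T\subseteq\{i\in\Gamma_*:z_i^*=0\}$.

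To establish $F(\bfw^*;T)=0$ I inspect its three blocks in \eqref{sta-eq-1}. The second block, $A_T\bfx^*+\bfb_T=0$, is immediate from $T\subseteq\Gamma_*$ and the definition \eqref{T-*} of $\Gamma_*$. For the third block, $\bfz^*_{\overline T}=0$: writing $\overline T=\overline{\Gamma}_*\cup(\Gamma_*\setminus T)$, the first piece gives $\bfz^*=0$ by \eqref{fact-11}, and the second piece gives $\bfz^*=0$ because I have just shown that indices in $\Gamma_*\setminus T$ necessarily have $z_i^*=0$. For the first block, the P-stationarity identity $\nabla f(\bfx^*)+A^\top\bfz^*=0$ combined with $\bfz^*_{\overline T}=0$ collapses $A^\top\bfz^*$ to $A_T^\top\bfz_T^*$, finishing the verification.

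The main obstacle, and the whole reason the lemma is nontrivial, is the handling of the ``ambiguous'' indices in (d). Without the observation that $\Gamma_*\setminus T$ can only contain indices with $z_i^*=0$, the third block of $F(\bfw^*;T)$ would not obviously vanish and the subsequent quadratic-convergence argument would collapse. The key technical input here is precisely the strict separation supplied by $\tau<\tau_*$ in \eqref{non-negative}; absent this, a $z_i^*>0$ could drift out of $\S$ under perturbation and land in $\overline T$, breaking $\bfz^*_{\overline T}=0$.
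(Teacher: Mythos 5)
Your proposal is correct and follows essentially the same route as the paper: both exploit the strict margins guaranteed by $\tau<\tau_*$ (the paper's analogue of your cases (a)--(c) is its derivation that $\E_*=\E_*^o=\{i: y_i^*=z_i^*=0\}$ and hence $\S_*\subseteq\S$, $\O_*\subseteq\O$, $\E\subseteq\E_*$ in a neighbourhood), so that the only indices whose membership can shift under perturbation are exactly those with $y_i^*=z_i^*=0$, after which the three blocks of $F(\bfw^*;T)=0$ are checked just as you do. The only cosmetic difference is organizational: you argue pointwise by sign cases and decompose $\overline T=\overline{\Gamma}_*\cup(\Gamma_*\setminus T)$, while the paper phrases the same facts as set inclusions; your explicit $\|A\|$-scaling of the radius is in fact slightly more careful than the paper's corresponding step.
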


\begin{proof} i) \cref{cor-p-staionary} states that the \ts\ $\bfw^*$ of (\ref{ell-0-1}) satisfies
\begin{eqnarray}
\label{sta-eq-1-*}
\nabla f(\bfx^*)  + A_{\Gamma_*} \bfz^*_{\Gamma_*}=0,~~
 \bfy^*_{\Gamma_*}=0, ~~
\bfz^*_{\overline \Gamma_*}  =0 
\end{eqnarray} 
 for $0<\tau<\tau_*$, where $\Gamma_*=\S_*\cup\E^o_*$. Note that $\E_*\setminus\E^o_*\subseteq \overline{\Gamma}_*$ which by \eqref{sta-eq-1-*} leads to
\begin{eqnarray} \label{sta-eq-1-*-z}
   \bfz^*_{\E_*\setminus\E^o_*}=0.
\end{eqnarray} 
Using the same reasoning for proving \eqref{non-negative},  
we can prove for $ 0<\tau<\tau_*= \min\{\tau_1,\tau_2\}$ in \eqref{tau-*} that
\begin{eqnarray} \label{yz-tau-lambda} 
 y_i^*  >\sqrt{2\tau\lambda}  ~~{\rm if}~~y_i^*> 0,~~~~~~ \tau z^*_i <   \sqrt{2\tau \lambda   } ~~{\rm if}~~z^*_i> 0.
\end{eqnarray}
Therefore,  we have the following facts {
\allowdisplaybreaks
\begin{eqnarray*}
\arraycolsep=1.4pt\def\arraystretch{1.5}
\begin{array}{rcll}
\E^o_*&=& \{i\in\N_m: y_i^*=0, \tau z_i^* \in \{0, \sqrt{2\tau\lambda}\} \}&~~(\text{by} ~\eqref{p-stationary-T})\\
&=& \{i\in\N_m: y_i^*=0, z_i^*=0 \}, &~~(\text{by}~\eqref{yz-tau-lambda})\\
\E_*\setminus\E^o_*&=& \{i\in\N_m: y_i^*\neq0, y_i^*+\tau z_i^* = \sqrt{2\tau\lambda } \}&~~(\text{by} ~\eqref{p-stationary-T})\\
 &=& \{i\in\N_m: y_i^* = \sqrt{2\tau\lambda} \}&~~(\text{by}~\eqref{sta-eq-1-*-z})\\
&=&\emptyset, &~~(\text{by}~\eqref{yz-tau-lambda})
 \end{array}\end{eqnarray*}}
These facts lead to
\begin{eqnarray}\label{E*0}
\E_*=\E^o_*\cup(\E_*\setminus\E^o_*)=\E^o_*
=\Big\{i\in\N_m: y_i^*= z_i^*=0\Big\},
\end{eqnarray}
which yields the following relations
\begin{eqnarray}\label{TSETO}
\Gamma_*=\S_*\cup \E_*^o=\S_*\cup \E_*,~~~~\overline \Gamma_*= \O_*. 
\end{eqnarray}   
 {For a sufficiently small $\delta_1^*$, any $\bfw\in N(\bfw^*,\delta_1^*)$ satisfies,
\allowdisplaybreaks\begin{eqnarray}\label{hz-h*z*}
&&  {| y_i +\tau z_i - y_i^* -\tau z^*_i |\leq c\delta_1^*},~~\forall~i\in\N_m,   
\end{eqnarray} 
 {where $c>0$ is a constant relied on $A$ and $\tau$.} The definitions of $\S$ and $\S_*$  in \eqref{p-stationary-T} mean that, for any $i\in\S$ or $i\in\S_*$,
\begin{eqnarray} \label{facts-S}
\arraycolsep=1.4pt\def\arraystretch{1.5}
\begin{array}{rcll}
 y_i+\tau z_i\in (0, \sqrt{2\tau\lambda})~~ &\Longleftrightarrow&~~  |y_i+\tau z_i-\sqrt{\tau\lambda/2} |< \sqrt{\tau\lambda/2},\\
y_i^*+\tau z_i^*\in (0, \sqrt{2\tau\lambda})~~ &\Longleftrightarrow&~~  |y_i^*+\tau z_i^*-\sqrt{\tau\lambda/2} |< \sqrt{\tau\lambda/2}.
\end{array}
\end{eqnarray} 
Using this fact, if $\S_* \nsubseteq \S$, then there is an $i\in \S_*$ but $i\notin\S$ such that
\allowdisplaybreaks
\begin{eqnarray*}  
\arraycolsep=1.4pt\def\arraystretch{1.5}
\begin{array}{rcll}
|y_i +\tau z_i - y_i^* -\tau z^*_i|
&=&|y_i+\tau z_i-\sqrt{\tau\lambda/2}  - y_i^*-\tau z^*_i+\sqrt{\tau\lambda/2}|\nonumber\\
&\geq&   \sqrt{\tau\lambda/2} -|y_i^*+\tau z^*_i-\sqrt{\tau\lambda/2}| ~~~~ (\text{by}~i\notin\S~\text{and}~\eqref{facts-S})\\
  &\geq&   \sqrt{\tau\lambda/2} -\max_{i\in\S_*} |y_i^*+\tau z^*_i-\sqrt{\tau\lambda/2}| =:\delta_s\\
&>&0, ~~~~(\text{by}~\eqref{facts-S})
\end{array}
\end{eqnarray*}  {Since $c\delta_1^*$} can be smaller than $\delta_s$, the above fact contradicts with \eqref{hz-h*z*}.  }Hence, it holds $\S_* \subseteq \S$. Similar reasoning also derives $\O_* \subseteq \O$. These allow us to obtain $$\E = \N_m \setminus (\S\cup\O)\subseteq \N_m \setminus (\S_*\cup\O_*)= \E_*.$$
Overall, for any $\bfw\in N(\bfw^*,\delta_1^*)$, it holds
\begin{eqnarray}\label{SSOOEE}
\S_* \subseteq \S,~~ \O_* \subseteq \O,~~ \E\subseteq \E_*.
\end{eqnarray} 
The above relations enable us to claim that 
\begin{eqnarray}\label{SS*E*}
(\S\setminus \S_*) \subseteq \E_*,~~~~(\O\setminus \O_*) \subseteq \E_*.
\end{eqnarray}
Now, we can show
\allowdisplaybreaks
\begin{eqnarray*} 
\arraycolsep=1.4pt\def\arraystretch{1.5}
\begin{array}{rcll}
\bfy^*_{ \S_*}&=&0,&~~(\text{by}~\S_*\subseteq \Gamma_*~\text{and}~\eqref{sta-eq-1-*})\\
\bfy^*_{\S\setminus \S_*} &=&0, &~~(\text{by}~ \S\setminus \S_*\subseteq\E_*~\text{from}~\eqref{SS*E*}~\text{and}~\eqref{E*0})\\
\bfy^*_{\E^o} &=&0,&~~(\text{by}~ \E^o\subseteq\E\subseteq\E_*~\text{from}~\eqref{SSOOEE}~\text{and}~\eqref{E*0})\\
\bfz^*_{ \O_*}  &=&0, &~~(\text{by}~\O_*\subseteq \overline \Gamma_*~\text{and}~\eqref{sta-eq-1-*})\\
\bfz^*_{\O\setminus \O_*} &=&0, &~~(\text{by}~ \O\setminus \O_*\subseteq\E_*~\text{from}~\eqref{SS*E*}~\text{and}~\eqref{E*0})\\
\bfz^*_{\E\setminus\E^o} &=&0.&~~(\text{by}~ \E\setminus\E^o\subseteq\E\subseteq\E_*~\text{from}~\eqref{SSOOEE}~\text{and}~\eqref{E*0})
\end{array}
\end{eqnarray*}
These conditions combining with
\begin{eqnarray*}T&=&\S\cup\E^o=\S_*\cup(\S\setminus \S_*)\cup\E^o,\\
 \overline T&=&\O\cup(\E\setminus\E^o)=\O_*\cup(\O\setminus \O_*)\cup(\E\setminus\E^o),
 \end{eqnarray*}
imply $\bfy^*_{ T} =0$ and $\bfz^*_{\overline T}=0$. 
As a consequence of this and $\bfz^*_{\overline \Gamma_*}=0$ from \eqref{sta-eq-1-*}, 
\begin{eqnarray*}
  0= f(\bfx^*)+ A_{\Gamma_*} ^\top\bfz^*_{\Gamma_*}=f(\bfx^*)+ A^\top\bfz^*= f(\bfx^*)+ A_{T} ^\top\bfz^*_{T}.
 \end{eqnarray*}
Overall, we verify $F(\bfw^*;T)=0$,  as desired. 
Finally, we observe that
 \beq 
\arraycolsep=1.4pt\def\arraystretch{1.5}
 \begin{array}{rcll}
T=\S\cup\E^o&=&\S_*\cup(\S\setminus \S_*)\cup\E^o\nonumber\\
&\subseteq&   \S_*\cup\E_* \cup\E^o&(\text{by}~ \S\setminus \S_*\subseteq\E_*~\text{from}~\eqref{SS*E*})\nonumber\\
&\subseteq&   \S_*\cup\E_* \cup \E_*  &(\text{by}~\E^o\subseteq\E\subseteq\E_*~\text{from}~\eqref{SSOOEE})\nonumber\\
&=&   \Gamma_*.  &(\text{by}~\eqref{TSETO})\nonumber
\end{array}
 \eeq
The whole proof is completed.   \end{proof}

The second technical result is about the uniform nonsingularity of
the perturbed Jacobian matrix $\nabla F_{\mu}(\bfw;T)$ over a 
neighborhood of $\bfw^*$.

\begin{lemma}\label{lemma-neighbour-2} Let $\bfw^*$ be a \ts\  with $0<\tau<\tau_*$ of the problem (\ref{ell-0-1}) and $\tau_*$ be given by (\ref{tau-*}).   Assume  \cref{ass} and \cref{ass2}. It holds 
 \beq \label{bd-jacobian-F-w-T}
  && C_*  \geq \|\nabla F_{\mu}(\bfw;T)\|\geq\sigma_{\min}( \nabla F_{\mu}(\bfw;T)) \geq   c_*>0, 
\eeq
for any $\bfw\in N(\bfw^*,\delta_2^*)$ and any $0\leq\mu\leq c_*/2$, where $T=\S\cup\E^o$ and
 \beq\label{radius-2}
C_*&:=&2\max\{1, \|H(\Gamma_*)\|\} ,~~~~ c_* := 0.5{\min\{1,\min_{\Gamma\subseteq\Gamma_*}\sigma_{\min}(H(\Gamma))\}}, \\
\label{H*}\delta_2^* &:=& \min\left\{\delta_1^*,   \frac{c_*}{ 2L_*}     \right\},~~~~H(\Gamma):=\left[  
     \begin{array}{ccc}
    \nabla^2 f(\bfx^*)& A^\top_{\Gamma}\\
     A_{\Gamma}&0
     \end{array}
     \right]. 
\end{eqnarray}
\end{lemma}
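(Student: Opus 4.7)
The plan is to reduce everything to a finite family of fixed matrices via Lemma~\ref{lemma-neighbour-1}, and then apply Weyl-type perturbation inequalities. Since $\delta_2^*\leq \delta_1^*$, any $\bfw\in N(\bfw^*,\delta_2^*)$ yields $T=\S\cup\E^o\subseteq \Gamma_*$. This is the crucial ingredient, because otherwise $T$ could in principle be any subset of $\N_m$ and a uniform lower bound on $\sigma_{\min}(\nabla F_{\mu}(\bfw;T))$ would be out of reach. Next, the block structure in \eqref{sta-eq-1} shows that $\nabla F_{\mu}(\bfw;T)$ is block-diagonal with blocks
\[
M_{\mu}(\bfw;T):=\begin{bmatrix}\nabla^2 f(\bfx) & A_T^\top\\ A_T & -\mu I\end{bmatrix}\quad\text{and}\quad I_{|\overline T|},
\]
so the singular value set of $\nabla F_{\mu}(\bfw;T)$ is the union of the singular values of $M_{\mu}(\bfw;T)$ and the value $1$ (with multiplicity $|\overline T|$). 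It therefore suffices to prove $\|M_{\mu}(\bfw;T)\|\leq C_*$ and $\sigma_{\min}(M_{\mu}(\bfw;T))\geq c_*$, after which the bounds \eqref{bd-jacobian-F-w-T} follow from $1\leq C_*$ and $1\geq 2c_*\geq c_*$ (since $c_*\leq 1/2$ by its definition).

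For the upper bound I would decompose $M_{\mu}(\bfw;T)=H(T)+E_1+E_2$, where $E_1$ has the single nonzero block $\nabla^2 f(\bfx)-\nabla^2 f(\bfx^*)$ and $E_2$ the single nonzero block $-\mu I$. Assumption~\ref{ass2} gives $\|E_1\|\leq L_*\|\bfx-\bfx^*\|\leq L_*\delta_2^*\leq c_*/2$, while $\|E_2\|\leq\mu\leq c_*/2$, hence $\|E_1\|+\|E_2\|\leq c_*\leq 1$. Since $T\subseteq \Gamma_*$, the matrix $H(T)$ is obtained from $H(\Gamma_*)$ by deleting the rows and columns indexed by $\Gamma_*\setminus T$, so by the standard submatrix inequality $\|H(T)\|\leq \|H(\Gamma_*)\|$. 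Putting these together yields $\|M_{\mu}(\bfw;T)\|\leq \|H(\Gamma_*)\|+1\leq 2\max\{1,\|H(\Gamma_*)\|\}=C_*$, as required.

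For the lower bound I would apply Weyl's inequality in the form $\sigma_{\min}(M_{\mu}(\bfw;T))\geq \sigma_{\min}(H(T))-\|E_1\|-\|E_2\|\geq \sigma_{\min}(H(T))-c_*$. The key payoff of the inclusion $T\subseteq \Gamma_*$ from Lemma~\ref{lemma-neighbour-1} is now visible: the definition of $c_*$ involves a minimum over \emph{all} $\Gamma\subseteq \Gamma_*$, so $\sigma_{\min}(H(T))\geq 2c_*$, whence $\sigma_{\min}(M_{\mu}(\bfw;T))\geq c_*>0$. The main obstacle is precisely that the index set $T$ varies with $\bfw$; Lemma~\ref{lemma-neighbour-1} collapses this potential continuum of cases to a finite combinatorial family, and the choice $\delta_2^*=\min\{\delta_1^*,\,c_*/(2L_*)\}$ is designed exactly so that (i) the finiteness argument applies and (ii) the Lipschitz perturbation $L_*\|\bfx-\bfx^*\|$ is absorbed into $c_*/2$. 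All the remaining inequalities are then routine triangle-inequality bookkeeping.
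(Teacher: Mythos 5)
Your main argument is correct and follows essentially the same route as the paper's proof: \cref{lemma-neighbour-1} supplies $T\subseteq\Gamma_*$, Weyl's singular-value perturbation inequality absorbs both the Lipschitz term $L_*\|\bfx-\bfx^*\|\leq L_*\delta_2^*\leq c_*/2$ and the smoothing term $\mu\leq c_*/2$, and then the submatrix bound $\|H(T)\|\leq\|H(\Gamma_*)\|$ together with the minimum over all $\Gamma\subseteq\Gamma_*$ built into the definition of $c_*$ gives the two-sided estimate. Your explicit block-diagonal reduction to the block $M_{\mu}(\bfw;T)$ plus the identity block (using $c_*\leq 1/2\leq 1\leq C_*$ to handle the singular value $1$) is only a cosmetic repackaging of the paper's computation, which keeps the full matrix and uses $\sigma_{\min}(\nabla F(\bfw^*;T))=\min\{1,\sigma_{\min}(H(T))\}$ and $\|\nabla F(\bfw^*;T)\|=\max\{1,\|H(T)\|\}$; likewise your one-shot decomposition $M_{\mu}=H(T)+E_1+E_2$ is equivalent to the paper's two-step triangle inequality through $\nabla F(\bfw;T)$ and $\nabla F(\bfw^*;T)$.

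The one step you omit is the verification that $c_*>0$, which is part of the lemma's conclusion and not automatic from the definition \eqref{radius-2}: if some $H(\Gamma)$ with $\Gamma\subseteq\Gamma_*$ were singular, then $c_*=0$ and your final inequality $\sigma_{\min}(M_{\mu}(\bfw;T))\geq c_*>0$ would be vacuous as a nonsingularity statement. The paper's proof opens precisely with this point: under \cref{ass}, $A_{\Gamma}$ has full row rank for every $\Gamma\subseteq\Gamma_*$ (being a row submatrix of the full-row-rank $A_{\Gamma_*}$) and $\nabla^2 f(\bfx^*)$ is nonsingular, from which it concludes that each $H(\Gamma)$ is nonsingular, hence $\min_{\Gamma\subseteq\Gamma_*}\sigma_{\min}(H(\Gamma))>0$ and $c_*>0$. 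You should add this (one- or two-line) argument; without it the quantitative bounds you prove do not deliver the strict positivity that \cref{quadratic-convergence} later relies on for the well-definedness of the Newton direction.
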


\begin{proof}  
Since  $\nabla^2 f(\bfx^*)$ is  {positive definite} and $A_{\Gamma_*}$ is full row rank by \cref{ass}, $A_\Gamma$ is full row rank for any $ \Gamma \subseteq   \Gamma_*$ and thus $H(\Gamma)$ is non-singular. 
We have $\sigma_{\min}(H(\Gamma)) > 0$ for any 
$ \Gamma \subseteq   \Gamma_*$ 
and  $c_*>0$. 
Now we build the bounds of $\nabla F_{\mu}(\bfw;T)$. 
For any given two matrices $D'$ and $D$, we have the first fact
\begin{eqnarray} \label{lower-bd-HH}
 \|D'-D\|
&\geq& \max_{i}|\sigma_i(D')-\sigma_i(D )|   \geq |\sigma_{i_0}( D')-\sigma_{i_0}(D)|\nonumber\\
&\geq& \sigma_{i_0}(D')-\sigma_{\min}(D) \geq \sigma_{\min}(D')-\sigma_{\min}(D), 
\end{eqnarray}
where the first inequality is from \cite[Reminder (2), on Page 76]{lutkepohl1996handbook} and  $i_0$ satisfies that $\sigma_{i_0}(D)=\sigma_{\min}(D)$.  Recall that
 \begin{eqnarray*}
 \label{FH*-FG}H(\Gamma)\overset{\eqref{H*}}{=}\left[  
     \begin{array}{ccc}
    \nabla^2 f(\bfx^*)& A^\top_{\Gamma }\\
     A_{\Gamma }&0
     \end{array}
     \right],~~~~ \nabla F(\bfw^*;T) =\left[  
     \begin{array}{cc}
    H(T)&0\\
     0&I  
     \end{array}
     \right].
    \end{eqnarray*}
For any $\bfw\in N(\bfw^*,\delta_2^*)$ and $\delta_2^*\leq\delta_1^*$,   \cref{lemma-neighbour-1} contributes to $T \subseteq   \Gamma_*$. So $H(T)$ is a submatrix of $H(\Gamma_*)$. Hence, 
\beq 
\|H(T)\| \leq \|H(\Gamma_*)\|,~~ \sigma_{\min}(H(T)) \geq \min_{\Gamma \subseteq \Gamma_*} \sigma_{\min} (H(\Gamma)),\nonumber
\eeq
where the latter is by $T \subseteq   \Gamma_*$, which gives us the second fact 
\beq\label{JJogG0}
\sigma_{\min}(\nabla F(\bfw^*;T))&=&\min\{1,\sigma_{\min}(H(T))\}\\
& \geq&\min\{1, \min_{\Gamma \subseteq \Gamma_*} \sigma_{\min} (H(\Gamma))\}=2c_*, \nonumber\\
\label{JJogG1}
\|\nabla F(\bfw^*;T)\| &=& \max\{1, \|H(T)\|\} \leq \max\{1, \|H(\Gamma_*)\|\} =C_*/2. 
\eeq
The locally Lipschitz continuity of $\nabla^2 f$  around $\bfx^*$ with $L_*$  yields the third fact,
\begin{eqnarray}
\label{HH*2}
\|\nabla F (\bfw^*;T)-\nabla F (\bfw;T)\|&=&\|\nabla^2  f (\bfx)-\nabla^2  f (\bfx^*)\|  \leq
   L_* \| \bfx -\bfx^* \| \nonumber\\
   &\leq& L_* \| \bfw -\bfw^* \| 
 \leq   L_* \delta_2^*
\leq c_*/2.   \hspace{.5cm}(\text{by}~\eqref{H*})
\end{eqnarray} 
Now these three facts allow us to derive
\allowdisplaybreaks 
\begin{eqnarray}\label{lower-bd-FF}
\arraycolsep=1.4pt\def\arraystretch{1.5}
\begin{array}{rclr}
&& \sigma_{\min}(\nabla F_{\mu}(\bfw;T))\\
 &\geq& \sigma_{\min}(\nabla F (\bfw ;T))- \|\nabla F(\bfw;T)-\nabla F_{\mu}  (\bfw;T)\| &(\text{by}~\eqref{lower-bd-HH})  \\
&=& \sigma_{\min}(\nabla F(\bfw ;T))- \mu &(\text{by}~\eqref{sta-eq-1}) \\
&\geq& \sigma_{\min}(\nabla F(\bfw^*;T))- \|\nabla F (\bfw^*;T)-\nabla F (\bfw;T)\|- \mu   &(\text{by}~\eqref{lower-bd-HH}) \\
&\geq&\sigma_{\min}(\nabla F(\bfw^*;T))- c_*/2 - \mu&(\text{by}~\eqref{HH*2}) \\
&\geq&\sigma_{\min}(\nabla F(\bfw^*;T))-  c_* &(\text{by}~\mu\leq c_*/2)   \\
&\geq&c_*. &(\text{by}~\eqref{JJogG0})  
\end{array}
\end{eqnarray}
Similarly, we also have
\allowdisplaybreaks 
\begin{eqnarray}\label{upper-bd-FF}
\arraycolsep=1.4pt\def\arraystretch{1.5}
\begin{array}{rclr}
&&\|\nabla F_{\mu}(\bfw;T)\|\\
 &\leq& \|\nabla F (\bfw ;T)\|+ \|\nabla F(\bfw;T)-\nabla F_{\mu}  (\bfw;T)\| \\
&=& \|\nabla F (\bfw ;T)\|+ \mu &(\text{by}~\eqref{sta-eq-1})\\
&\leq& \|\nabla F(\bfw^*;T)\|+ \|\nabla F (\bfw^*;T)-\nabla F (\bfw;T)\|+ \mu   &(\text{by}~\eqref{lower-bd-HH})  \\
&\leq& \|\nabla F(\bfw^*;T)\|+ c_*/2 + \mu&(\text{by}~\eqref{HH*2}) \\
&\leq& \|\nabla F(\bfw^*;T)\|+ c_* &(\text{by}~\mu\leq c_*/2)   \\
&\leq& C_*/2+c_* &(\text{by}~\eqref{JJogG1}) \\
&\leq& C_*. &(\text{by}~c_*\leq C_*/2)   
\end{array}
\end{eqnarray}
The whole proof is completed. 
\end{proof}

%

Now we are ready to claim the following local quadratic convergence.

\begin{theorem}\label{quadratic-convergence} 
Let $\bfw^*$ be any {\ts} with $0<\tau<\tau_*$ of  (\ref{ell-0-1}), $\tau_*$ and $\delta_2^*, c_*, C_*$ be given by (\ref{tau-*}) and (\ref{radius-2}). Assume  \cref{ass} and \cref{ass2}. Let $\{\bfw^k\}$ be the  sequence generated by  \cref{Alg-NM01} and $0\leq\mu_{-1}\leq c_*/2$. If the initial point  satisfies $\bfw^0\in N(\bfw^*,\delta_*)$, where
\begin{eqnarray}\label{delta-*}
\delta_*:=\min\left\{\delta_2^*,~{c_*}/{(2 (L_*+2\rho C_*))}\right\},
\end{eqnarray}
 then the following results hold.
\begin{itemize}
\item[a)] The sequence $\{\bfd^k\}_{k\geq0}$ is well defined and $\lim_{k\rightarrow\infty}\bfd^k=0$. 
\item[b)] The whole sequence $\{\bfw^k\}$ converges to $\bfw^*$ quadratically, namely,
\beq
\|\bfw^{k+1}-\bfw^{*}\|  &\leq& 
  ((L_*+2\rho C_*)/c_*) \|\bfw^{k}-\bfw^{*} \|^2. \nonumber 
\eeq
\item[c)] The halting condition satisfies
\beq
\|F(\bfw^{k+1};T_{k+1})\| 
&\leq& (L_*+2\rho C_*)(C_*/c_*^{3})  \|F(\bfw^{k};T_k)\|^2  \nonumber 
\eeq
and  \Cref{Alg-NM01} reaches $\|F(\bfw^{k};T_k)\| <  \epsilon$ for a given tolerance $\epsilon>0$ when \beq\label{k-tol}
k\geq \Big\lceil \log_2\Big(2 \sqrt{(L_*+2\rho C_*)(C_*/c_*)^3  }\|\bfw^{0}-\bfw^{*} \|\Big)-\log_2(\sqrt{\epsilon})\Big\rceil.\eeq  
\end{itemize}
\end{theorem}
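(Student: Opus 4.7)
The plan is to run a single induction on $k$ that simultaneously maintains $\bfw^k\in N(\bfw^*,\delta_*)$, $\mu_k\le c_*/2$, and the quadratic estimate of part (b); parts (a) and (c) then follow quickly. The key conceptual observation, which distinguishes this from a standard Newton analysis, is that although the index set $T_k$ may change at every iteration, \cref{lemma-neighbour-1} guarantees that for every $\bfw^k\in N(\bfw^*,\delta_*)\subseteq N(\bfw^*,\delta_1^*)$ one has $T_k\subseteq\Gamma_*$ and $F(\bfw^*;T_k)=0$. Thus $\bfw^*$ remains a common zero of the entire family of systems $\{F(\cdot;T_k)=0\}_k$ generated by the algorithm, which is exactly the property that a Newton analysis needs.

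\textbf{Inductive step.} Assuming $\bfw^k\in N(\bfw^*,\delta_*)$ and $\mu_k\le c_*/2$, \cref{lemma-neighbour-2} supplies the uniform two-sided bound $c_*\le\sigma_{\min}(\nabla F_{\mu_k}(\bfw^k;T_k))\le\|\nabla F_{\mu_k}(\bfw^k;T_k)\|\le C_*$, so the Newton system \eqref{newton-dir} is uniquely solvable and $\bfd^k$ is well defined. Using $F(\bfw^*;T_k)=0$, I would rewrite the update as
\[
\bfw^{k+1}-\bfw^* \;=\; \bigl[\nabla F_{\mu_k}(\bfw^k;T_k)\bigr]^{-1}\Bigl\{\nabla F_{\mu_k}(\bfw^k;T_k)(\bfw^k-\bfw^*) - F(\bfw^k;T_k) + F(\bfw^*;T_k)\Bigr\}
\]
and decompose the braced quantity as $R_k^{(1)}+R_k^{(2)}$ with $R_k^{(1)} := (\nabla F_{\mu_k}-\nabla F)(\bfw^k;T_k)(\bfw^k-\bfw^*)$ and $R_k^{(2)} := \nabla F(\bfw^k;T_k)(\bfw^k-\bfw^*) - F(\bfw^k;T_k) + F(\bfw^*;T_k)$. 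The structure of $\nabla F_\mu$ in \eqref{sta-eq-1}, together with the update rule \eqref{update-mu} and the upper bound $\|F(\bfw^k;T_k)\|\le C_*\|\bfw^k-\bfw^*\|$ (obtained by integrating $\nabla F(\cdot;T_k)$ along the segment $[\bfw^*,\bfw^k]$ and invoking \cref{lemma-neighbour-2} with $\mu=0$), gives $\|R_k^{(1)}\|\le\mu_k\|\bfw^k-\bfw^*\|\le\rho C_*\|\bfw^k-\bfw^*\|^2$. For $R_k^{(2)}$, \cref{ass2} translates directly into Lipschitz continuity of $\nabla F(\cdot;T_k)$ with constant $L_*$, so the standard integral remainder gives $\|R_k^{(2)}\|\le(L_*/2)\|\bfw^k-\bfw^*\|^2$. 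Dividing by the $c_*$ lower bound on $\sigma_{\min}(\nabla F_{\mu_k})$ yields part (b); the explicit choice $\delta_*\le c_*/(2(L_*+2\rho C_*))$ in \eqref{delta-*} then forces $\|\bfw^{k+1}-\bfw^*\|\le\tfrac12\|\bfw^k-\bfw^*\|$, so $\bfw^{k+1}\in N(\bfw^*,\delta_*)$, while $\mu_{k+1}\le\alpha\mu_k\le c_*/2$ closes the induction.

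\textbf{Parts (a) and (c).} Part (a) is then immediate: $\bfd^k$ is well defined for every $k$ by the induction, and $\bfd^k=\bfw^{k+1}-\bfw^k\to 0$ since $\bfw^k\to\bfw^*$ (geometrically by the induction, quadratically by part (b)). For part (c) I would combine the upper bound $\|F(\bfw^{k+1};T_{k+1})\|\le C_*\|\bfw^{k+1}-\bfw^*\|$ with part (b) and the matching lower bound $\|F(\bfw^k;T_k)\|\ge c_*\|\bfw^k-\bfw^*\|$. The latter follows from the mean-value representation $F(\bfw^k;T_k)-F(\bfw^*;T_k)=\bigl(\int_0^1\nabla F(\bfw^*+s(\bfw^k-\bfw^*);T_k)\,ds\bigr)(\bfw^k-\bfw^*)$: the averaged Jacobian has smallest singular value at least $2c_*-(L_*/2)\delta_2^*\ge c_*$ by \cref{lemma-neighbour-2}, \cref{ass2}, and the definition of $\delta_2^*$ in \eqref{H*}. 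Substituting into part (b) produces the announced inequality $\|F(\bfw^{k+1};T_{k+1})\|\le (L_*+2\rho C_*)(C_*/c_*^3)\|F(\bfw^k;T_k)\|^2$, and the iteration count \eqref{k-tol} is obtained by unrolling the resulting scalar quadratic recursion $t_{k+1}\le t_k^2$ after rescaling $t_k$ by the appropriate constant.

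\textbf{Main obstacle.} The principal difficulty is precisely the varying $T_k$: the classical Newton theory assumes one is driving a fixed smooth equation to zero, whereas here the equation $F(\cdot;T_k)=0$ changes at every iterate. The resolution is the combination of \cref{lemma-neighbour-1}, which pins $\bfw^*$ as a common zero of every system encountered near $\bfw^*$, and \cref{lemma-neighbour-2}, which gives uniform singular-value bounds for $\nabla F_\mu(\bfw;T)$ over all admissible $(\bfw,T,\mu)$. Once these two lemmas are in hand, the argument above is essentially a Kantorovich-style quadratic-convergence estimate; the genuinely subtle work has already been done upstream in establishing them.
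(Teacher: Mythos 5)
Your proposal is correct, and it rests on the same two pillars as the paper's own proof --- \cref{lemma-neighbour-1} (every index set $T_k$ generated in $N(\bfw^*,\delta_*)$ satisfies $T_k\subseteq\Gamma_*$ and $F(\bfw^*;T_k)=0$) and \cref{lemma-neighbour-2} (uniform two-sided singular-value bounds) --- combined with the same induction maintaining $\bfw^k\in N(\bfw^*,\delta_*)$ and $\mu_k\le c_*/2$. Where you genuinely diverge is in the error decomposition of part (b). The paper controls the perturbation parameter via $\mu_0\le\rho\|F(\bfw^0;T_0)\|=\rho\|\nabla F_{\mu_0}(\bfw^0;T_0)\,\bfd^0\|\le \rho C_*\bigl(\|\bfw^1-\bfw^*\|+\|\bfw^0-\bfw^*\|\bigr)$, which places the \emph{next} iterate on both sides of the resulting inequality; the stray term $\rho C_*\|\bfw^1-\bfw^*\|\,\|\bfw^0-\bfw^*\|$ is then absorbed using $\rho C_*\delta_*\le c_*/2$, and only after this absorption does the quadratic bound with constant $(L_*+2\rho C_*)/c_*$ emerge. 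You avoid this self-referential step entirely by bounding $\mu_k\le\rho\|F(\bfw^k;T_k)\|\le\rho C_*\|\bfw^k-\bfw^*\|$ a priori through the integral representation of $F(\bfw^k;T_k)-F(\bfw^*;T_k)$; your split into $R_k^{(1)}$ (the $\mu_k I$ perturbation, of norm at most $\mu_k\|\bfw^k-\bfw^*\|$ by the structure of \eqref{sta-eq-1}) and $R_k^{(2)}$ (the standard Newton remainder, at most $(L_*/2)\|\bfw^k-\bfw^*\|^2$) then gives the constant $(L_*+2\rho C_*)/(2c_*)$, a factor of two sharper than the stated bound, so the theorem follows a fortiori. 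A second, smaller improvement appears in part (c): the paper invokes a pointwise mean-value theorem for the vector-valued map $F(\cdot;T_k)$, which is not strictly valid in that form, whereas your integral mean-value representation is rigorous; the price is that you must lower-bound the smallest singular value of the \emph{averaged} Jacobian, which you do correctly by perturbing from $\nabla F(\bfw^*;T_k)$, whose smallest singular value is at least $2c_*$ (this follows from $T_k\subseteq\Gamma_*$ and the definition of $c_*$ in \eqref{radius-2}, i.e.\ it re-derives the paper's intermediate estimate rather than the lemma's weaker stated conclusion), losing only $(L_*/2)\delta_2^*\le c_*/4$. In short: same skeleton and same key lemmas, but your decomposition is explicit rather than implicit, repairs a minor rigor gap, and yields slightly better constants; the final unrolling to \eqref{k-tol} is routine in both treatments.
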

\begin{proof} a) It is easily observed   
\begin{eqnarray}\label{mu-k}
0< \mu_{k} \leq \mu_{k-1}, ~k=1,2,3,\ldots, ~~~~{\rm and}~~~~ \lim_{k\rightarrow\infty}\mu_k=0.
\end{eqnarray} 
It follows from \cref{lemma-neighbour-1} and the facts
$\delta_*\leq \delta_2^*\leq\delta_1^*$ and $\bfw^0\in N(\bfw^*,\delta_*)$ that     
\beq\label{F-w*-T0-0}F (\bfw^{*};T_0) =0  
\eeq
with $T_0=\S_0\cup\E_0^o$ and from \cref{lemma-neighbour-2} that     
\beq\label{grad-F-w*-T0-c} C_*  \geq \|\nabla F_{\mu_0}(\bfw^0;T_0)\|\geq\sigma_{\min}( \nabla F_{\mu_0}(\bfw^0;T_0)) \geq  c_*
\eeq
Here, we used the fact that $\mu_0\leq \mu_{-1}\leq c_*/2$ by  \eqref{mu-k}. 
From \eqref{newton-dir-0}, we have
\beq\label{dkt}
\nabla F_{\mu_0} (\bfw^{0};T_0)~\bfd^{0}=- F (\bfw^{0};T_0).
\eeq
  \cref{lemma-neighbour-2} states that $\nabla F_{\mu_0} (\bfw^{0};T_0)$ is non-singular and thus $\bfd^{0}$ is well defined.
Let 
\beq\label{w0-beta-w*}
\bfw^{0}_\beta= \bfw^{*} + \beta(\bfw^{0}-\bfw^{*})=(\bfx^{0}_\beta;\bfz^{0}_\beta)
\eeq
where $\beta\in[0,1].$ One can easily check that $\bfw^{0}_\beta\in N(\bfw^*,\delta_*)$ as $$\|\bfw^{0}_\beta-\bfw^{*}\|= \beta\|\bfw^{0}-\bfw^{*}\|\leq  \delta_*.$$
The definition  in \eqref{sta-eq-1} enables us to obtain
\allowdisplaybreaks 
\begin{eqnarray*}
\arraycolsep=1.4pt\def\arraystretch{1.5}
\begin{array}{rcll}
&&\|\nabla F_{\mu_0} (\bfw^{0};T_0)- \nabla F (\bfw^{0}_\beta;T_0) \|  \\
&\leq&\|\nabla^2 f(\bfx^0)- \nabla^2 f(\bfx^0_\beta)\| + \mu_0&(\text{by}~\eqref{sta-eq-1})  \\
& \leq& L_* \|\bfx^0-\bfx^0_\beta\|+ \rho  \|F(\bfw^0;T_0)\|&(\text{by}~\eqref{update-mu}) \\ 
& \leq& L_* \|\bfw^0-\bfw^0_\beta\|+ \rho \|\nabla F_{\mu_0} (\bfw^{0};T_0)\| \|\bfd^0\|&(\text{by}~\eqref{newton-dir-0}) \\ 
& \leq& L_* \|\bfw^0-\bfw^0_\beta\|+ \rho C_* \|\bfw^1-\bfw^0\|&(\text{by}~\eqref{grad-F-w*-T0-c}) \\ 
& \leq& L_*(1-\beta)\|\bfw^0-\bfw^*\|+ \rho C_* (\|\bfw^1-\bfw^*\|+\|\bfw^0-\bfw^*\|) &(\text{by}~\eqref{w0-beta-w*})  \\ 
& =& (L_*(1-\beta)+\rho C_*)\|\bfw^0-\bfw^*\|+ \rho C_* \|\bfw^1-\bfw^*\|.
\end{array} \end{eqnarray*}
Denote $\Theta_{\mu_0}:=\nabla F_{\mu_0} (\bfw^{0};T_0)$ and $\Theta(\beta):= \nabla F (\bfw^{0}_\beta;T_0)$.  The above condition yields
  \beq\label{lipschitz}
\arraycolsep=1.4pt\def\arraystretch{1.5}
\begin{array}{rcll}
 \int_0^1 \|\Theta_{\mu_0}  - \Theta(\beta)\|d\beta
\leq (L_*/2+\rho C_*)\|\bfw^0-\bfw^*\|+ \rho C_* \|\bfw^1-\bfw^*\|.
\end{array} \eeq
For the fixed $T_0$, the function $F(\cdot,T_0)$ is differentiable, which by \eqref{F-w*-T0-0} derives
\beq\label{mean-value}
\arraycolsep=1.4pt\def\arraystretch{1.5}
\begin{array}{rcll}
F (\bfw^{0};T_0)=F (\bfw^{*};T_0)+\int_0^1 \Theta(\beta)(\bfw^{0}-\bfw^{*})d\beta=\int_0^1 \Theta(\beta) (\bfw^{0}-\bfw^{*})d\beta.
\end{array}\eeq
Now the following chain of inequalities holds.
\begin{eqnarray*}
\arraycolsep=1.4pt\def\arraystretch{1.5}
\begin{array}{rcll}
c_*\|\bfw^{1}-\bfw^{*}\| &=& c_*\|\bfw^{0}+\bfd^{0}-\bfw^{*}\|\\
& {=} &c_*\|\bfw^{0}-\bfw^{*}-\Theta_{\mu_0}^{-1}   F(\bfw^{0};T_0) \|&~~(\text{by}~\eqref{dkt})\\
& {\leq} & \|\Theta_{\mu_0} (\bfw^{0}-\bfw^{*})-   F(\bfw^{0};T_0) \|&~~(\text{by}~\eqref{grad-F-w*-T0-c})\\
&= & \|\Theta_{\mu_0}  (\bfw^{0}-\bfw^{*})-  \int_0^1 \Theta(\beta)(\bfw^{0}-\bfw^{*})d\beta \| &~~(\text{by}~\eqref{mean-value}) \\
& \leq& \int_0^1 \|\Theta_{\mu_0} -\Theta(\beta) \|\|\bfw^{0}-\bfw^{*}\|d\beta \\
& =& ({\theta_*}/2) \|\bfw^{0} -\bfw^{*}\|^2+   {\rho C_*}  \|\bfw^1-\bfw^*\|\|\bfw^{0}-\bfw^{*}\| &~~(\text{by}~\eqref{lipschitz}) \\
&\leq& ({\theta_*}/2) \|\bfw^{0} -\bfw^{*}\|^2+ {\rho C_*\delta_*}  \|\bfw^1-\bfw^*\|   \\
&\leq& ({\theta_*}/2) \|\bfw^{0} -\bfw^{*}\|^2+ (c_*/2) \|\bfw^1-\bfw^*\|, 
\end{array}
\end{eqnarray*}
where $\theta_*:=L_* +2\rho C_*$ and the last inequality is from $\delta_*\leq  {c_*}/(2 \theta_*)$ by \eqref{delta-*} and
\beq\label{w0-w*-delta*}
\|\bfw^{0}-\bfw^{*}\|<\delta_*\leq  {c_*}/(2 \theta_*)<  {c_*}/({2\rho  C_*}).
\eeq
 The above chain of inequalities  suffices to the following fact
\beq\label{quadratic-chain}
\|\bfw^{1}-\bfw^{*}\|\leq  (\theta_*/c_*) \|\bfw^{0} -\bfw^{*}\|^2.
\eeq
This together with $\|\bfw^{0}-\bfw^{*}\|<\delta_*$ and \eqref{w0-w*-delta*} derives
\beq 
\|\bfw^{1}-\bfw^{*}\|  &\leq& 
   (\theta_*/c_*)  \delta_* \|\bfw^{0}-\bfw^{*} \|\leq (1/2)\|\bfw^{0}-\bfw^{*} \|< \delta_*,  \nonumber
 \eeq
which means $\bfw^1\in N(\bfw^*,\delta_*)$. In addition, $\mu_1\leq\mu_0\leq c_*/2$ by  \eqref{mu-k}. Hence, replacing $T_0 $ by $T_1$, the same reasoning allows us to show that $\bfd^{1}$ is well defined and $$\|\bfw^{2}-\bfw^{*}\|  \leq (\theta_*/c_*)   \|\bfw^{1}-\bfw^{*} \|^2 .$$ By the induction, we can conclude that $\bfw^k\in N(\bfw^*,\delta_*)$, $\bfd^{k}$ is well defined and 
\beq\label{quadratic-rate}
\hspace{10mm}\|\bfw^{k+1}-\bfw^{*}\|  &\leq& (\theta_*/c_*) \|\bfw^{k}-\bfw^{*} \|^2, \\
\label{quadratic-rate-1}&\leq& (\theta_*/c_*)  \|\bfw^{k}-\bfw^{*} \|\delta_* \leq (1/2) \|\bfw^{k}-\bfw^{*} \|.\hspace{.2cm}(\text{by}~\eqref{w0-w*-delta*})
\eeq
Therefore, \eqref{quadratic-rate} claims b). The conclusion of a) can be made by \eqref{quadratic-rate-1}  that   $$\bfw^{k}\rightarrow\bfw^{*},~~ \bfd^k=\bfw^{k+1}-\bfw^{k}=\bfw^{k+1}-\bfw^{*}+\bfw^{*}-\bfw^{k}\rightarrow0.$$ 

c)  The  above  proof shows $\bfw^k\in N(\bfw^*,\delta_*)$ and hence \eqref{gw*-0} results in
\beq\label{wk-in-N}  F(\bfw^{*};T_k){=}0,\eeq
where $T_k=\S_k\cup\E_k^o$.
By letting $\bfw^{k}_\beta= \bfw^{*} + \beta(\bfw^{k}-\bfw^{*}),$ where $\beta\in[0,1]$, we have $\bfw^k_\beta\in N(\bfw^*,\delta_*)$. To show \eqref{bd-jacobian-F-w-T} in \cref{lemma-neighbour-2}, we verified the lower and upper bounds by \eqref{lower-bd-FF} and \eqref{upper-bd-FF}. Similarly, we can prove these bounds hold for $\nabla F (\bfw^{k}_\beta;T_k)$. (In fact, since $\bfw^k_\beta\in N(\bfw^*,\delta_*)\subseteq N(\bfw^*,\delta_2^*)$ by $\delta_*\leq \delta_2^*$, one just needs to set $\mu=0$ and $\bfw= \bfw^{k}_\beta$ in  \eqref{lower-bd-FF} and \eqref{upper-bd-FF}. Therefore,
\beq\label{C-F-w-beta-c}
C_*  \geq \|\nabla F (\bfw^{k}_\beta;T_k)\|\geq\sigma_{\min}( \nabla F (\bfw^{k}_\beta;T_k)) \geq  c_*, 
\eeq
Again, the function $F(\cdot,T_k)$ is differentiable for the fixed $T_k$, so the Mean-value theorem states that there is a $\beta_0\in(0,1)$ satisfying
\beq\label{FkTk}
\|F(\bfw^{k},T_k)\|&=&\|F(\bfw^{*};T_k)+  \nabla F (\bfw^{k}_{\beta_0};T_k) (\bfw^{k}-\bfw^{*})\|\nonumber\\
&=&\| \nabla F (\bfw^{k}_{\beta_0};T_k) (\bfw^{k}-\bfw^{*}) \|\hspace{1.7cm}(\text{by}~\eqref{wk-in-N})\nonumber\\
&\in&  [~c_*\|\bfw^{k}-\bfw^{*} \|, C_*\|\bfw^{k}-\bfw^{*} \|~], \hspace{0.8cm}(\text{by}~\eqref{C-F-w-beta-c})
\eeq
This contributes to 
\allowdisplaybreaks
\begin{eqnarray*}\arraycolsep=1.4pt\def\arraystretch{1.5}
\begin{array}{rclr}
\|F(\bfw^{k};T_{k})\| \leq  C_*\|\bfw^{k}-\bfw^{*}\|  
 & {\leq} & (\theta_*C_*/c_*)   \|\bfw^{k-1}-\bfw^{*} \|^2 &(\text{by}~\eqref{quadratic-rate}) \\
 & {\leq}& (\theta_*C_*/c_*^{3})  \|F(\bfw^{k-1};T_{k-1})\|^2&(\text{by}~\eqref{FkTk})\\
 &{\leq}& (\theta_*C_*^3/c_*^{3})      \|\bfw^{k-1}-\bfw^{*} \|^2&(\text{by}~\eqref{FkTk})\\
& {\leq}& (\theta_*C_*^3/c_*^{3})  2^{-2}  \|\bfw^{k-2}-\bfw^{*} \|^2&~~(\text{by}~\eqref{quadratic-rate-1})\\
    &\vdots&\\
 & \leq& (\theta_*C_*^3/c_*^{3})   2^{2-2k} \|\bfw^{0}-\bfw^{*} \|^2,&(\text{by}~\eqref{quadratic-rate-1})
 \end{array}  \end{eqnarray*}
where the third inequality yields the first conclusion in c). This also enable to verify that $\|F(\bfw^{k};T_k)\|$ $< {\epsilon}$ if $k$ satisfies \eqref{k-tol}.
The whole proof is completed.
\end{proof} 

\begin{remark} \label{Remark-Primal-Dual}
 {\bf Relationship to primal-dual active-set algorithms}.
It is interesting to note that when $\mu_k \equiv \mu$ (a constant) for all indices $k$, \cref{Alg-NM01} shares a similar framework to the primal-dual
active-set algorithm in \cite[Alg.~1]{fan2014primal}, whose main target is
the convex quadratic programming in compressed sensing with $\ell_1$ regularization. 
In terms of convergence theory, both \cref{quadratic-convergence} and
\cite[Thm.~2]{fan2014primal} require the initial point to be close to the
interested solution point.
There are two key differences. (i) \cref{quadratic-convergence}
is able to identify the quadratic convergence region $N(\bfw^*, \delta_*)$ 
with $\delta_*$ being given by (\ref{delta-*}), while
\cite[Thm.~2]{fan2014primal} does not have such a characterization and
is only about it local convergence (not its convergence rate).
(ii) However,  \cite[Thm.~2]{fan2014primal}  can be globalized via a continuation
technique, while it is challenging to globalize \cref{Alg-NM01} because
we are dealing with $0/1$-loss function and there are no merit functions available
for globalization.
\end{remark} 
\begin{remark}{\bf On the choice of the smoothing parameter} $\mu_k$.
 {An interesting question raised by one referee is whether the particular
choice of $\mu_k$ in (\ref{update-mu}) may play a role in globalization of 
\cref{Alg-NM01}.
From the smoothing perspective, there exists a number of good strategies to
update $\mu$ as long as it drives $\mu_k \rightarrow 0$. For example,
we may update $\mu_k$ by solving the equation $e^\mu - 1=0$ via Newton's method
as done in \cite{qi2000smoothing}, see also \cite{huang2006smoothing} for other
options. 
To incorporate such a strategy in a globalization scheme, we must find a merit
function to work with. As commented in  \cref{Remark-Primal-Dual},
it is not easy to construct a merit function because the composition of the
operator $\| (\cdot)_+\|_0$ with the inequality constraint $A \bfx \le \bfb$
leads to the scenario where the sparsity is not over a symmetric set any more. 
We refer to \cite{beck2016minimization, lu2015optimization} for
detailed discussion on algorithmic advantages of sparsity being
over symmetric sets.
}	
\end{remark}

\section{Numerical Experiments}\label{sec:numerical}

In this part, we will conduct extensive numerical experiments of  {\tt NM01} in  \cref{Alg-NM01} by using MATLAB (R2019a) on a laptop with $32$GB memory and Inter(R) Core(TM) i9-9880H 2.3Ghz CPU, against a few leading solvers for solving SVM and  1-bit CS problems. 

\subsection{Experiments for SVM}

There exists a large body of SVM literature. We only focus on the binary classification, which has a training dataset $\{(\bfa_i^0,c_i):i\in\N_m\}$, with $\bfa_i^0\in \R^{n-1}$ being samples and $c_i\in \{-1,1\}$ being the two classes. It is widely recognized that the data are often linearly inseparable and \eqref{ell-0-1} is an ideal model to deal with this case with the following setup 
\[
  f(\bfx)=\|D\bfx\|^2,~~A=-[c_1\bfa_1,\ldots,c_m\bfa_m]^\top,~~\bfb=\bfe,
\]
where $D$ is a diagonal matrix with $D_{ii}=1,i\in\N_{n-1}$ and $D_{nn}\geq0$ (e.g., $D_{nn}=10^{-4}$),  {and $\bfa_i=(\bfa_i^0 ;1)\in\R^n,i\in\N_m$}. We will consider two types of datasets: synthetic data and real data described below.

\begin{example}[Synthetic data in $\R^2$]\label{ex:syn-data-outlier} Give four samples $(0,0), (0,1), (1,0), (1,a)$ with labels $+1,+1,-1,-1$, where the last point can be treated as an outlier when $a>1$.
\end{example}

\begin{example}[Real data in higher dimensions]\label{ex:real-data} We select 40 datasets from three libraries: libsvm
, uci
and kaggle.
All datasets are feature-wisely scaled to $[-1,1]$ and all the classes not being  $1$ are treated as $-1$.  Their details are presented in  \cref{Table-svm-less-m-more-n}. There are 16 datasets with $m \leq n$ and 24 datasets with $m > n$.  
\end{example}

\begin{table}[!th]
	\renewcommand{\arraystretch}{.8}\addtolength{\tabcolsep}{1pt}
	\caption{Descriptions of real datasets.}\vspace{-3mm}
	\label{Table-svm-less-m-more-n}
	\begin{center}
		\begin{tabular}{lllrrrr }
			\hline
Data&Datasets&	Source	&	$n$	&	$m$	&	Sparse	&		\\\hline
\multicolumn{7}{c}{$m\leq n$}\\\hline
\texttt{arce}&	Arcene	&	uci	&	10000 	&	100 &		No	\\
\texttt{colc}&	Colon-cancer	&	libsvm	&	2000 	&	62 	&		No	\\
\texttt{dbw1}&	\multirow{4}{3.5cm}{Dbworld e-mails}	&	uci	&	4702 	&	64 	&		Yes	\\
\texttt{dbw2}&		&	uci	&	3721 	&	64 	&		Yes	\\
\texttt{dbw3}&		&	uci	&	242 	&	64 	&		Yes	\\
\texttt{dbw4}&		&	uci	&	229 	&	64 	&		Yes	\\
\texttt{dext}&	Dexter	&	uci	&	19999 	&	300 	&		Yes	\\
\texttt{dmea}&	Detect malacious executable 	&	uci	&	531 	&	373 	&		Yes	\\
\texttt{doro}&	Dorothea	&	uci	&	100000 	&	800 	&		Yes	\\
\texttt{dubc}&	Duke breast-cancer	&	libsvm	&	7129 	&	38 	&		No	\\
\texttt{fabc}&	Farm ads binary classification	&	kaggle	&	54877 	&	4143 	&		Yes	\\
\texttt{leuk}&	Leukemia	&	libsvm	&	7129 	&	38 	&		No	\\
\texttt{lsvt}&	Lsvt voice rehabilitation	&	uci	&	310 	&	126 	&		No	\\
\texttt{newb}&	News20.binary	&	libsvm	&	{1355191} 	&	19996 	&		Yes	\\
\texttt{rcvb}&	Rcv1.binary	&	libsvm	&	47236 	&	20242 	&		Yes	\\
\texttt{scad}&	Scadi	&	uci	&	205	&	70 &		No	\\\hline
\multicolumn{7}{c}{$m> n$}\\\hline

\texttt{aips}&	Airline passenger satisfaction	&	kaggle	&	22 	&	103904 	&	 	No	\\

\texttt{ccfd}&	Credit card fraud dtection	&	kaggle	&	28 	&	284807 	&	 	No	\\

\texttt{covt}&	Covtype.binary	&	libsvm	&	54 	&	581012 	&	 	Yes	\\

\texttt{dccc}&	Default of credit card clients	&	kaggle	&	23 	&	30000 	&	 	No	\\

\texttt{escd}&	Email spam classification dataset	&	kaggle	&	3000 	&	5172 	 	&	Yes	\\

\texttt{gise}&	Gisette	&	libsvm	&	5000 	&	6000 	&	 	Yes	\\

\texttt{hepm}&	Hepmass	&	uci	&	28 	&	{7000000} 	&	 	No	\\

\texttt{hfxf}&	Hedge fund x: financial mod. chal.	&	kaggle	&	88 	&	10000 	&	 	No	\\

\texttt{higg}&	Higgs	&	uci	&	28 	&	{11000000} 	&	 	No	\\

\texttt{hmeq}&	Hmeq\_data	&	kaggle	&	10 	&	5960 	&	 	No	\\
 
\texttt{htru}&	Htru2	&	uci	&	8 	&	17898 	&	 	No	\\

\texttt{idac}&	Ida2016challenge	&	uci	&	170 	&	60000 	&	 	Yes	\\
   
\texttt{ijcn}&	Ijcnn1	&	libsvm	&	22 	&	49990 	&	 	Yes	\\
   
\texttt{mrpe}&	Malware analysis datasets: raw pe	&	kaggle	&	1024 	&	51959 	&	 	No	\\

 \texttt{mtpe}&	Malware analysis datasets: top-1000 	&	kaggle	&	1000 	&	47580 	&	 	Yes	\\

\texttt{ospi}&	Online shoppers purchasing intention	&	uci	&	17 	&	12330 	&	 	No	\\

 \texttt{pssr}&	Parkinson speech dataset	&	uci	&	26 	&	1039 	&	 	No	\\
 
 \texttt{qsot}&	Qsar oral toxicity	&	uci	&	1024 	&	8992 	&	 	Yes	\\

 \texttt{reas}&	Real-sim	&	libsvm	&	20958 	&	72309 	&	 	Yes	\\

 \texttt{retb}&	Real time bidding	&	kaggle	&	88 	&	{1000000} 	&	 	No	\\

\texttt{sctp}&	Santander customer transaction	&	kaggle	&	200 	&	200000 	&	 	No	\\

 \texttt{skin}&	Skin\_nonskin	&	libsvm	&	3 	&	245056 	&	 	No	\\

 \texttt{spli}&	Splice	&	libsvm	&	60 	&	1000 	&	 	No	\\
 
 \texttt{susy}&	Susy	&	uci	&	18 	&	{5000000} 	&	 	No	\\ 
 \hline
 		\end{tabular}
	\end{center}
\end{table}

There are large numbers of methods that have been proposed for SVMs, each with its advantages/disadvantages. It is more reasonable to compare {\tt NM01} with those methods that aim at optimizing the $0/1$-loss function directly, such as MIP-based methods. However, it is known that MIP-based methods prefer the datasets on small scales (see the numerical experiments reported in \cite{nguyen2013algorithms,tang2014mixed,ustun2016supersparse}) and behave very slowly for the datasets on mediate/large scales, such as most datasets in \Cref{Table-svm-less-m-more-n}.    Therefore, we will not include them in the following numerical comparisons.

On the other hand, there is very limited work on developing methods that directly optimize $0/1$-loss from the perspective of continuous optimization. Because of this, we are unable to find an available Matlab implementation for such kinds of methods. Hence, we only select five leading solvers, with available Matlab implementations from the machine learning community. These methods solve the surrogate/relaxations of $0/1$-loss involved SVMs. They are {\tt HSVM} from the library {\tt libsvm}\footnote{\url{https://www.csie.ntu.edu.tw/~cjlin/libsvm/}}\cite{chang2011libsvm}, {\tt SSVM} \cite{SV1999} implemented by liblssvm\footnote{\url{https://www.esat.kuleuven.be/sista/lssvmlab/}}\cite{pelckmans2002matlab},   {\tt RSVM} \cite{wu2007robust},   {\tt LSVM}  from the library liblinear\footnote{\url{https://www.csie.ntu.edu.tw/~cjlin/liblinear/}}\cite{fan2008liblinear}, and {\tt FSVM} (a MATLAB built-in function {\tt fitclinear}\footnote{\url{https://mathworks.com/help/stats/fitclinear.html}}). All involved parameters are set as their default values.  To demonstrate the performance of one method, let $\bfx$ be its obtained solution  and $A_0:=[\bfa_1,\ldots,\bfa_m]^\top$. We will report the CPU  {\tt Time} and the classification accuracy {\tt Acc} defined by ${\tt Acc}:=1-  \|{\rm sgn}(A_0\bfx)-\bfc\|_0/m.$

\subsubsection{Implementation of \cref{Alg-NM01}} 

We terminate our algorithm if one of the conditions is satisfied: $k\geq 1000$ or $\|F(\bfw^k;T_k)\|<10^{-4}$.  We initialize $\bfx^0=0$ and $\bfz^0=\bfe$, and set  $\mu_{-1}=0.05$ if $m<n$ and $\mu_{-1}=5$ otherwise.  Moreover,  we update $\mu_k$  by \eqref{update-mu} with $\rho=1$ and $\alpha=0.5$ if $k$ is a multiple of $5$.
 {The rest of this part is about setting the parameters $\tau$ and $\lambda$.
We try to suggest general principles, but bearing in mind that the best 
strategy of setting $\tau$ and $\lambda$ is problem dependent.
For the  validation purpose, we conducted the performance comparison of
\Cref{Alg-NM01} on four test problems
{\tt arce, colc, dbw1} and {\tt fabc} where we vary one parameter 
while the other is
being fixed. Fig.~\ref{fig:tau} is for fixed $\lambda$ and
Fig.~\ref{fig:lam} is for fixed $\tau$.
}  

\begin{itemize} 
	
\item[(i)] {\bf For SVM problems}.
 {It follows from \eqref{lambda-lower-bound} in \Cref{rem-tau-lam} that  if $2\lambda \tau \leq \min_i\{b_i^2: b_i>0\}$,
then $\bfx^*=0$ and $\bfz^*=0$ is a P-stationary point. 
For SVM, this condition turns into $2\lambda \tau \leq 1$ since $\bfb=\bfe$. 
This phenomenon can be observed in our numerical experiments. 
For example, zero solutions were obtained by \Cref{Alg-NM01} when $\tau\leq 1/(2\lambda)$ for fixed $\lambda=15$ in Fig.~\ref{fig:tau} 
and when $\lambda\leq 1/(2\tau)$ for fixed $\tau=5$ in Fig.~\ref{fig:lam}. 
Hence, it is recommended to set $\tau$ and $\lambda$ 
to satisfy $2\lambda \tau > 1$ for SVM problems.}

\item[(ii)] {\bf On the choice of } $\tau$.
 {Despite that a sufficient condition  $0<\tau<\tau_*$ is provided in \cref{quadratic-convergence},  it is still difficult to set a proper $\tau$ as $\tau_*$ is not known. However, as the condition is sufficient, it is unnecessary to choose it from $(0, \tau_*)$ strictly. To see its effect, we tested it with varying $\tau\in [10^{-3},10]$, fixed $\lambda=15$  and report the results in Fig.~\ref{fig:tau}.  It can be clearly seen that bigger values of $\tau$  (e.g., $\tau\geq 1$) lead to better accuracy {\tt ACC}.  
An underlying heuristic explanation is as follows:   
\Cref{Alg-NM01} solves the system  \eqref{newton-dir-0} with index set $T_k={\cal S}_k\cup {\cal E}_k^o$ being decided by the parameter $\tau$, see \eqref{p-stationary-T}. 
We observed that setting $\tau$ too small often led to infrequent change of
$T_k$ and this often forced the algorithm fell into (possibly undesirable) local regions too quickly.  
By contrast, setting $\tau$ slightly bigger enabled altering $T_k$ 
frequently enough to make \Cref{Alg-NM01} escape from undesirable local regions so as to achieve better solutions. 
Since the  theoretical convergence is in favour of small values of $\tau<\tau_*$,  it is not suggested to set the  values of $\tau$  too large.}  

\item[(iii)] {\bf On the choice of } $\lambda$.
 { For the parameter $\lambda$, we varied values $\lambda\in[10^{-2},10^2]$ and report its effect in Fig.~\ref{fig:lam}. 
As expected, zero solutions were achieved when $\lambda\leq 1/(2\tau)$. 
From the left sub-figure, ACCs are in favour of bigger values of $\lambda>1/(2\tau)$ which is reasonable since it penalizes the 0/1 loss  in \eqref{ell-0-1}. This choice of $\lambda$ is consistent with what we have
observed in (i) above.}

\item[(iv)] {\bf Estimating} $\tau_*$.
 {Although $\tau_*$ is unknown, we may be able to numerically
estimate it by using the obtained solution and \eqref{tau-*} provided that some
information was available a priori.  
Note that if $\bfx^*=0$ and $\bfz^*=0$  then $\tau_*=\min\{\tau_1,\tau_2\} = 1/(2\lambda)$ by \eqref{tau-*}, which can be seen in   Fig.~\ref{fig:tau} and  Fig.~\ref{fig:lam}. On the other hand, if a solution satisfies that $A\bfx^*+\bfb<0$, then $\tau_1=\tau_2=\infty$ \eqref{tau-*} and hence $\tau_*=\infty$. This phenomena can be observed for datasets {\tt arce} and {\tt colc} since they are linearly separable, see the results for $\tau>1/(2\lambda)$ in  Fig.~\ref{fig:tau} and for $\lambda>1/(2\tau)$ in  Fig.~\ref{fig:lam}. 
}

\end{itemize} 
 
 {
Therefore, in the following experiments, we set $\tau=5$ and $\lambda=15$ for simplicity. 
}

 \begin{figure}[!th]
\centering
	\includegraphics[width=1\textwidth]{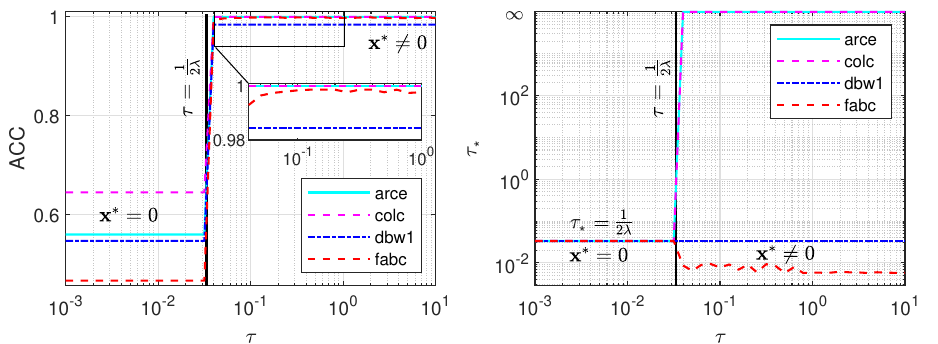}
\vspace{-5mm}
\caption{Effect of $\tau$ with fixed $\lambda=15$ for SVM.}\vspace{-5mm}
\label{fig:tau}
\end{figure}

 \begin{figure}[!th]
\centering
	\includegraphics[width=1\textwidth]{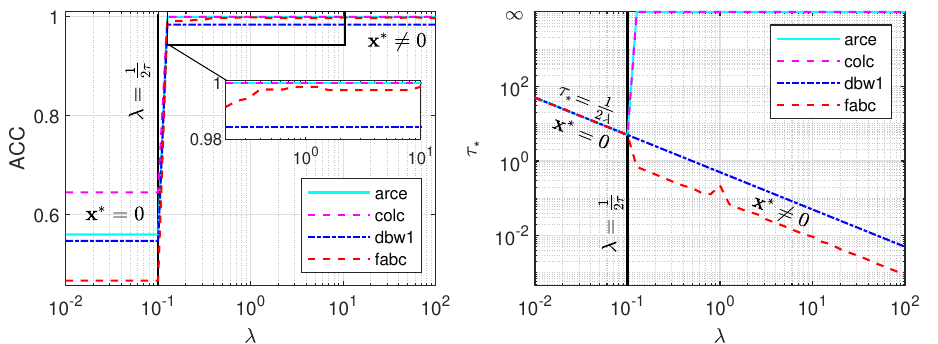}
\vspace{-5mm}
\caption{Effect of $\lambda$ with fixed $\tau=5$ for SVM.}\vspace{-5mm}
\label{fig:lam}
\end{figure}

\subsubsection{Numerical comparisons} 

We first employ five methods to solve \cref{ex:syn-data-outlier} under different $a=1,10,100$ to test their robustness to the outliers. For such data, the classifier with a maximum margin is $x_1^*=1/2$. The classifiers by each method are plotted in Fig.~\ref{fig:ex0}, where {\tt HSVM} is omitted since it solves the dual problem and does not provide the solution $\bfx$. Obviously, {\tt NM01}  finds the true classifiers for all scenarios, while the other methods are influenced significantly by $a$. 

\begin{figure}[!th]
\centering
\begin{subfigure}{.325\textwidth}
	\centering
	\includegraphics[width=1\textwidth]{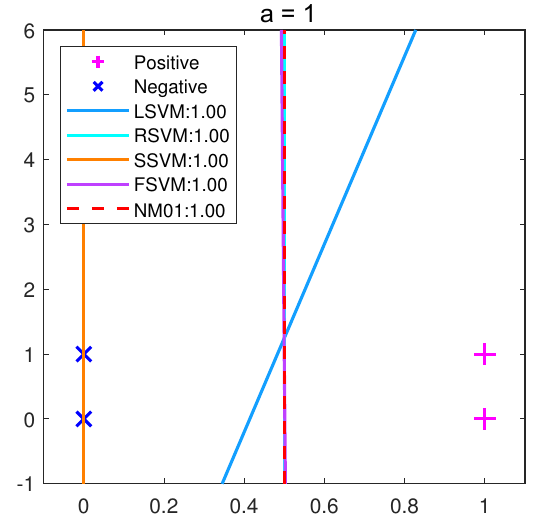}
\end{subfigure}%
\begin{subfigure}{.325\textwidth}
	\centering
	\includegraphics[width=1\textwidth]{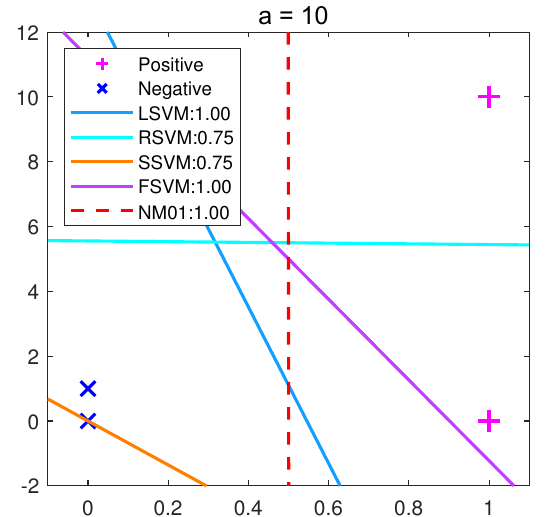}
\end{subfigure}  
\begin{subfigure}{.325\textwidth}
	\centering
	\includegraphics[width=1\textwidth]{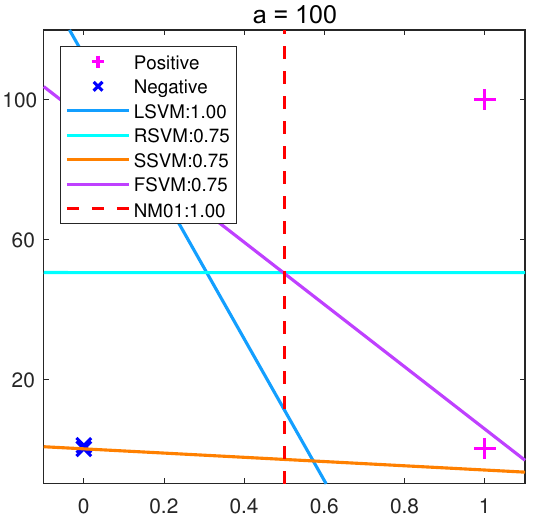}
\end{subfigure} 
 
\caption{Robustness to outliers.} \vspace{-5mm}
\label{fig:ex0}
\end{figure}

 For  \cref{ex:real-data}, we have 40 datasets with sample size from a few to ten million (e.g., {\tt higg} having $11,000,000$ samples).  Results of six methods are reported in \cref{table:ex2-no-test},  where ``$--$'' denotes the results are not obtained  if a solver takes too much time or requires a large memory  that is out of the capacity of our desktop.  For example, {\tt HSVM} consumes more than 10,000 seconds on  the data {\tt covt} and {\tt SSVM} requires at lest 32GB memory to solve {\tt mtpe}.  In general, {\tt NM01} renders the highest  {\tt Acc} for most datasets.   For the computational time, {\tt FSVM} and {\tt LSVM} are very fast for datasets of moderate sizes.  However, our method is more competitive especially when the data size  is in million scale, such as {\tt higg}, {\tt retb}, {\tt susy}, {\tt   hepm} with  more than $10^6$ samples, {\tt NM01} runs the fastest.  For instance, {\tt FSVM} and {\tt LSVM} respectively took 80.69 seconds and 65.65 seconds for {\tt higg}, which is solved by our method within 5.63 seconds.

 \begin{table}[!th]
\centering
\caption{Results of six solvers for  \cref{ex:real-data}}
\label{table:ex2-no-test} 
\renewcommand{\arraystretch}{.8}\addtolength{\tabcolsep}{-1pt}	
\begin{tabular}{l|cccccc|ccccccc}\hline
 &\multicolumn{6}{c|}{ {\tt Acc} }
&    &\multicolumn{6}{c}{ {\tt Time (seconds)}}\\
       \cline{2-7} \cline{8-14}
data&	{\tt FSVM}	&	{\tt HSVM}	&	{\tt LSVM}	&	{\tt RSVM}	&	{\tt SSVM}	&	{\tt NM01}	&	
&	{\tt FSVM}	&	{\tt HSVM}	&	{\tt LSVM}	&	{\tt RSVM}	&	{\tt SSVM}	&	{\tt NM01}\\ \hline
 {\tt arce 	}&	1.000 	&	1.000 	&	1.000 	&	1.000 	&	1.000 	&	1.000 	&	&	0.040 	&	0.965 	&	0.077 	&	0.017 	&	8.390 	&	0.051 	\\
{\tt colc 	}&	0.952 	&	1.000 	&	1.000 	&	1.000 	&	1.000 	&	1.000 	&	&	0.207 	&	0.018 	&	0.015 	&	0.493 	&	0.864 	&	0.008 	\\
 {\tt dbw1 	}&	0.984 	&	0.984 	&	0.984 	&	0.984 	&	0.984 	&	0.984 	&	&	0.043 	&	0.011 	&	0.002 	&	0.061 	&	7.713 	&	0.033 	\\
 {\tt dbw2 	}&	0.984 	&	0.984 	&	0.984 	&	0.984 	&	0.984 	&	0.984 	&	&	0.038 	&	0.010 	&	0.002 	&	0.069 	&	4.787 	&	0.023 	\\
 {\tt dbw3 	}&	0.984 	&	0.984 	&	1.000 	&	0.953 	&	1.000 	&	1.000 	&	&	0.022 	&	0.001 	&	$--$ 	&	0.035 	&	0.115 	&	0.002 	\\
 {\tt dbw4 	}&	0.984 	&	0.984 	&	1.000 	&	0.938 	&	1.000 	&	1.000 	&	&	0.094 	&	0.001 	&	0.001 	&	0.027 	&	0.124 	&	0.005 	\\
{\tt dext 	}&	1.000 	&	1.000 	&	1.000 	&	1.000 	&	1.000 	&	1.000 	&	&	0.008 	&	0.167 	&	0.009 	&	0.055 	&	81.63 	&	0.029 	\\
{\tt dmea 	}&	1.000 	&	1.000 	&	1.000 	&	0.984 	&	1.000 	&	1.000 	&	&	0.008 	&	0.010 	&	0.003 	&	0.518 	&	1.100 	&	0.013 	\\
{\tt doro 	}&	1.000 	&	1.000 	&	1.000 	&	1.000 	&	$--$ 	&	1.000 	&	&	0.026 	&	8.451 	&	0.078 	&	0.564 	&	$--$ 	&	0.163 	\\
{\tt dubc 	}&	1.000 	&	1.000 	&	1.000 	&	1.000 	&	1.000 	&	1.000 	&	&	0.010 	&	0.035 	&	0.019 	&	0.007 	&	5.969 	&	0.006 	\\
{\tt fabc 	}&	0.996 	&	0.999 	&	0.999 	&	0.994 	&	$--$ 	&	0.999 	&	&	0.040 	&	8.434 	&	0.194 	&	96.08 	&	$--$ 	&	0.275 	\\
{\tt leuk 	}&	1.000 	&	1.000 	&	1.000 	&	1.000 	&	1.000 	&	1.000 	&	&	0.010 	&	0.044 	&	0.022 	&	0.006 	&	5.991 	&	0.004 	\\
 {\tt lsvt 	}&	0.952 	&	0.984 	&	1.000 	&	0.873 	&	1.000 	&	1.000 	&	&	0.030 	&	0.007 	&	0.005 	&	0.045 	&	0.141 	&	0.008 	\\
 {\tt newb 	}&	0.995 	&	$--$ 	&	0.999 	&	$--$ 	&	$--$ 	&	0.999 	&	&	0.481 	&	$--$ 	&	2.031 	&	$--$ 	&	$--$ 	&	1.251 	\\
{\tt rcvb 	}&	0.990 	&	0.990 	&	0.997 	&	$--$ 	&	$--$ 	&	0.998 	&	&	0.092 	&	180.3 	&	0.256 	&	$--$ 	&	$--$ 	&	0.153 	\\
 {\tt scad 	}&	0.986 	&	1.000 	&	1.000 	&	0.971 	&	1.000 	&	1.000 	&	&	0.011 	&	0.001 	&	$--$ 	&	0.015 	&	0.097 	&	0.004 	\\\hline
 
{\tt aips 	}&	0.876 	&	0.877 	&	0.874 	&	$--$ 	&	$--$ 	&	0.878 	&	&	0.539 	&	509.6 	&	0.393 	&	$--$ 	&	$--$ 	&	0.028 	\\

 {\tt ccfd 	}&	0.999 	&	0.999 	&	0.999 	&	$--$ 	&	$--$ 	&	0.999 	&	&	7.155 	&	117.8 	&	1.451 	&	$--$ 	&	$--$ 	&	0.209 	\\
 {\tt covt 	}&	0.763 	&	$--$ 	&	0.757 	&	$--$ 	&	$--$ 	&	0.764 	&	&	8.638 	&	$--$ 	&	1.801 	&	$--$ 	&	$--$ 	&	0.444 	\\
 {\tt dccc 	}&	0.810 	&	0.809 	&	0.802 	&	$--$ 	&	0.799 	&	0.820 	&	&	0.098 	&	46.84 	&	0.079 	&	$--$ 	&	139.4 	&	0.011 	\\
 {\tt escd 	}&	0.993 	&	0.992 	&	0.996 	&	0.856 	&	0.971 	&	0.996 	&	&	0.077 	&	8.442 	&	0.113 	&	459.2 	&	13.09 	&	0.228 	\\
{\tt gise 	}&	1.000 	&	1.000 	&	1.000 	&	1.000 	&	1.000 	&	1.000 	&	&	0.215 	&	63.15 	&	0.451 	&	106.3 	&	1238 	&	0.298 	\\
{\tt hepm 	}&	0.837 	&	$--$ 	&	0.836 	&	$--$ 	&	$--$ 	&	0.840 	&	&	16.72 	&	$--$ 	&	37.37 	&	$--$ 	&	$--$ 	&	2.789 	\\
 {\tt hfxf 	}&	0.589 	&	0.589 	&	0.589 	&	0.572 	&	0.588 	&	0.590 	&	&	0.065 	&	17.25 	&	0.065 	&	179.1 	&	6.480 	&	0.010 	\\
 {\tt higg 	}&	0.641 	&	$--$ 	&	0.641 	&	$--$ 	&	$--$ 	&	0.651 	&	&	80.69 	&	$--$ 	&	65.65 	&	$--$ 	&	$--$ 	&	5.631 	\\
 {\tt hmeq 	}&	0.860 	&	0.859 	&	0.860 	&	0.803 	&	0.862 	&	0.865 	&	&	0.030 	&	0.687 	&	0.004 	&	59.40 	&	1.618 	&	0.002 	\\
 {\tt htru 	}&	0.977 	&	0.977 	&	0.977 	&	$--$ 	&	0.971 	&	0.979 	&	&	0.038 	&	0.626 	&	0.016 	&	$--$ 	&	25.81 	&	0.006 	\\
{\tt idac 	}&	0.991 	&	0.992 	&	0.992 	&	$--$ 	&	$--$ 	&	0.992 	&	&	0.376 	&	61.73 	&	0.740 	&	$--$ 	&	$--$ 	&	0.193 	\\
{\tt ijcn 	}&	0.924 	&	0.924 	&	0.923 	&	$--$ 	&	$--$ 	&	0.931 	&	&	0.223 	&	39.26 	&	0.097 	&	$--$ 	&	$--$ 	&	0.025 	\\
 {\tt mrpe 	}&	0.948 	&	$--$ 	&	0.951 	&	$--$ 	&	$--$ 	&	0.951 	&	&	2.678 	&	$--$ 	&	9.470 	&	$--$ 	&	$--$ 	&	1.324 	\\
 {\tt mtpe 	}&	0.968 	&	0.984 	&	0.981 	&	$--$ 	&	$--$ 	&	0.984 	&	&	0.396 	&	275.5 	&	13.11 	&	$--$ 	&	$--$ 	&	2.185 	\\
 {\tt ospi 	}&	0.884 	&	0.884 	&	0.879 	&	$--$ 	&	0.873 	&	0.893 	&	&	0.066 	&	3.916 	&	0.021 	&	$--$ 	&	10.35 	&	0.006 	\\
{\tt pssr 	}&	0.641 	&	0.643 	&	0.654 	&	0.626 	&	0.647 	&	0.663 	&	&	0.216 	&	0.063 	&	0.006 	&	1.105 	&	0.089 	&	0.001 	\\
 {\tt qsot 	}&	0.946 	&	0.969 	&	0.967 	&	0.849 	&	0.945 	&	0.971 	&	&	0.058 	&	23.91 	&	0.126 	&	1865 	&	233.9 	&	0.378 	\\
 {\tt reas 	}&	0.989 	&	0.989 	&	0.994 	&	$--$ 	&	$--$ 	&	0.994 	&	&	0.320 	&	741.7 	&	0.505 	&	$--$ 	&	$--$ 	&	0.682 	\\
 {\tt retb 	}&	0.998 	&	$--$ 	&	0.998 	&	$--$ 	&	$--$ 	&	0.998 	&	&	18.77 	&	$--$ 	&	12.57 	&	$--$ 	&	$--$ 	&	0.541 	\\
 {\tt sctp 	}&	0.910 	&	$--$ 	&	0.909 	&	$--$ 	&	$--$ 	&	0.914 	&	&	1.205 	&	$--$ 	&	6.304 	&	$--$ 	&	$--$ 	&	0.358 	\\
 {\tt skin 	}&	0.929 	&	0.929 	&	0.924 	&	$--$ 	&	$--$ 	&	0.943 	&	&	0.172 	&	232.7 	&	0.109 	&	$--$ 	&	$--$ 	&	0.029 	\\
{\tt spli 	}&	0.839 	&	0.839 	&	0.840 	&	0.805 	&	0.840 	&	0.840 	&	&	0.036 	&	0.148 	&	0.015 	&	0.566 	&	0.100 	&	0.001 	\\
 {\tt susy 	}&	0.788 	&	$--$ 	&	0.787 	&	$--$ 	&	$--$ 	&	0.790 	&	&	18.38 	&	$--$ 	&	22.16 	&	$--$ 	&	$--$ 	&	1.916 	\\											
\hline 
\end{tabular}
\end{table}

\subsection{Simulations for 1-bit CS} 

The aim of 1-bit CS is to recover a sparse signal $\bfx$ from  $\bfc={\rm sgn}(A_0\bfx)$, where $A_0:=[\bfa_1,~\ldots,\bfa_m]^\top\in\R^{m\times n}$ and $c_i\in\{1,-1\}, i\in\N_m$.  The original optimization model for 1-bit CS \cite{BB08} takes the following form: 
\[   \min~\|\bfx\|_0, ~~{\rm s.t.} \quad c_{i}\langle \bfa_i, \bfx\rangle\geq 0,~i\in\N_m. \]   
Various relaxation methods have been proposed. Here we adopt the smoothing technique using the popular $\ell_q$ norm  ($0<q <1)$ to approximate the $\ell_0$ norm \cite{lai2013improved}  and use the $0/1$-loss function to deal with the constraints.  This leads to the 
model \eqref{ell-0-1}  with 
 $$f(\bfx)=\sum_{i=1}^n (x_i^2+ \varepsilon^2)^{q/2}, ~~A=-[c_1\bfa_1,\ldots,c_m\bfa_m],~~\bfb=\epsilon {\bf1},$$  
where $\varepsilon >0$, $\epsilon>0$.    Here, $\bfb=\epsilon {\bf1}$ is adopted from  \cite{dai2016noisy}. In our test, we set $q=0.5, \epsilon=0.05$ but update $\varepsilon$ by $\varepsilon_0=0.5$ and $\varepsilon_{k+1}=\varepsilon_k/2$. The test problems are taken from \cite{huang2018robust} and are described as follows.

\begin{example}\label{ex:cs-cor} 
	Rows of $A_0$ are the independent and identically distributed (iid) samples  of $\mathcal{N}(0,\Sigma)$ with $\Sigma_{ij}=v^{-|i-j|}, i,j\in\N_n$ and $v\in(0,1)$.  The nonzero entries of the ground truth $s$-sparse vector $\bfx^*\in\R^n$, namely, $\|\bfx^*\|_0\leq s$, are generated from the i.i.d.  samples of the standard Gaussian distribution $\mathcal{N}(0,1)$, followed by a  normalization of $\bfx^*$  to be a unit vector. Let $\bfc^*={\rm sgn}(A_0\bfx^*)$ and $\tilde{\bfc}={\rm sgn}(A_0\bfx^*+ \xi)$, where entries of the noise $\xi$  are the i.i.d. samples of $\mathcal{N}(0,0.1^2)$. Finally,  we randomly select $\lceil r m\rceil$ entries in $\tilde{\bfc}$ and flip their signs, and the flipped vector is denoted by  $\bfc$, where  $r$  is the flipping ratio. 
\end{example}

We report the CPU {\tt time},  the signal-to-noise ratio 
${\tt SNR}:=-10{\log}_{10}(\| \bfx -\bfx^*\|^2)$, the hamming error  ${\tt HE}:= \|{\rm sgn}(A_0\bfx)-\bfc^*\|_0/m$, and the hamming distance ${\tt HD}:= \|{\rm sgn}(A_0\bfx)-\bfc\|_0/m$,  where $\bfx$ is the solution obtained  by a method.  We note that larger {\tt SNR} (smaller  {\tt HE}, or smaller {\tt HD}) corresponds to better recovery. 

\subsubsection{Implementation and benchmark methods} 

The stopping criteria and the rule for updating $\mu_k$  for \cref{Alg-NM01} are the same as for the SVM case.    We initialize $\bfx^0=0$ and $\bfz^0=\bfe$.   We tested the algorithm under different choices of $\tau$ and $\lambda$,  and only report the numerical results with $\tau=1$ and $\lambda=1$, which satisfy $2\tau\lambda>\min_i\{b_i^2: b_i>0\}=\epsilon^2$ and could yield good overall performance.  Moreover, it is observed that the generated solutions had many tiny values, see Fig.~\ref{fig:ref}. Therefore, we apply a refinement step that keeps the $s$ largest elements in the magnitude of the solution and sets the rest to zeros.   

 \begin{figure}[!th]
\centering
\begin{subfigure}{.495\textwidth}
	\centering
	\includegraphics[width=1\textwidth]{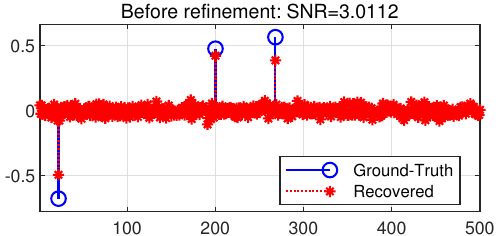}
\end{subfigure}%
\begin{subfigure}{.495\textwidth}
	\centering
	\includegraphics[width=1\textwidth]{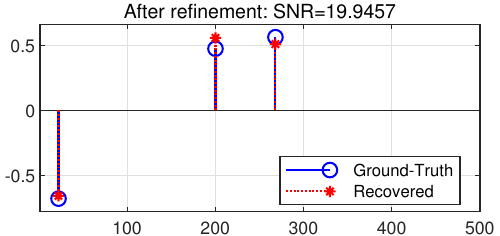}
\end{subfigure} \\\vspace{2mm}
 \begin{subfigure}{.495\textwidth}
	\centering
	\includegraphics[width=1\textwidth]{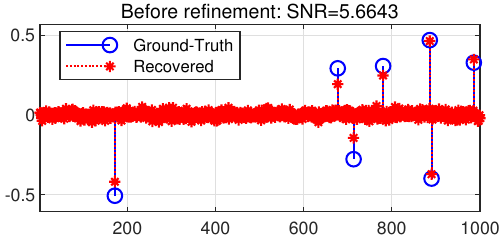}
\end{subfigure}%
\begin{subfigure}{.495\textwidth}
	\centering
	\includegraphics[width=1\textwidth]{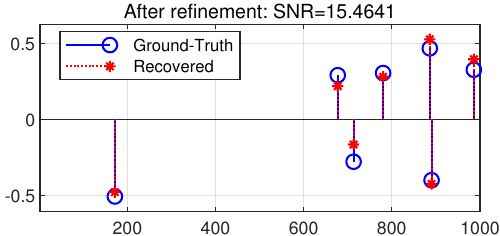}
\end{subfigure} 
\caption{Refinement of the solution.}\vspace{-3mm}
\label{fig:ref}
\end{figure}

\begin{figure}[H]  
\centering
\begin{subfigure}{.325\textwidth}
	\centering
	\includegraphics[width=1\linewidth]{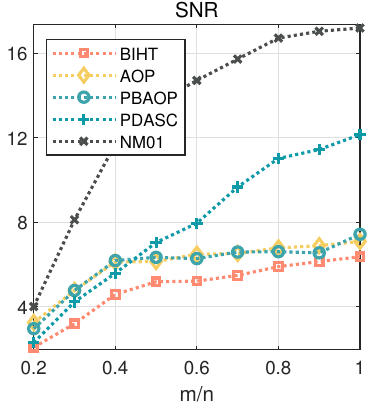}
\end{subfigure}
\begin{subfigure}{.325\textwidth}
	\centering
	\includegraphics[width=1\linewidth]{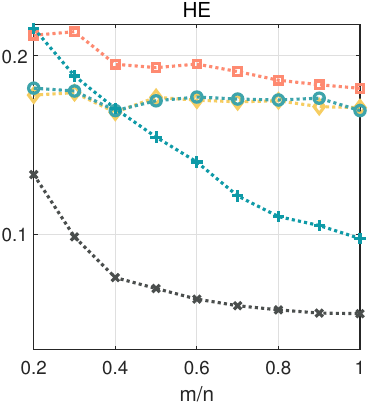}
\end{subfigure}
\begin{subfigure}{.325\textwidth}
	\centering
	\includegraphics[width=1\linewidth]{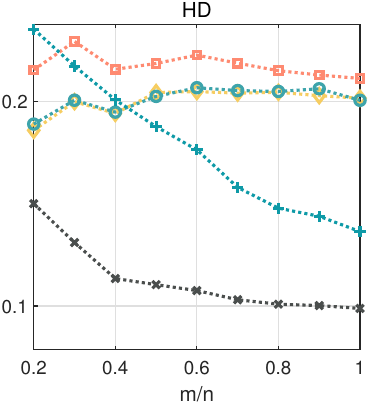}
\end{subfigure}\\\vspace{3mm}

\begin{subfigure}{.325\textwidth}
	\centering
	\includegraphics[width=1\linewidth]{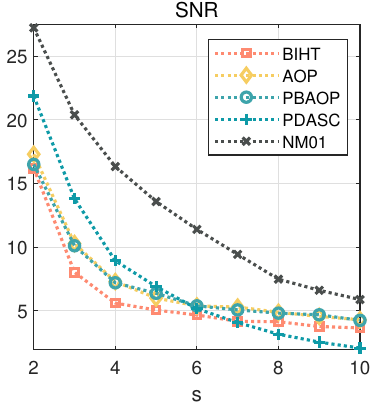}
\end{subfigure}
\begin{subfigure}{.325\textwidth}
	\centering
	\includegraphics[width=1\linewidth]{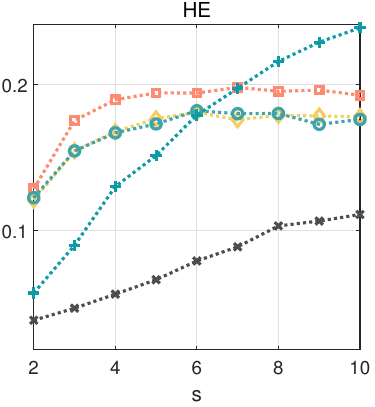}
\end{subfigure}
\begin{subfigure}{.325\textwidth}
	\centering
	\includegraphics[width=1\linewidth]{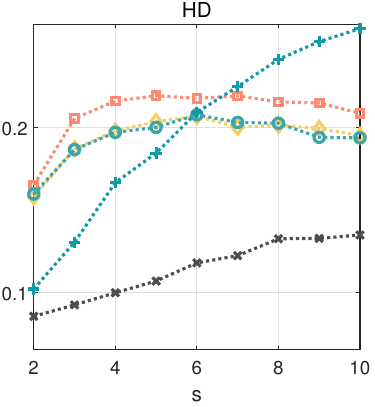}
\end{subfigure}\\ \vspace{3mm}

\begin{subfigure}{.325\textwidth}
	\centering
	\includegraphics[width=1\linewidth]{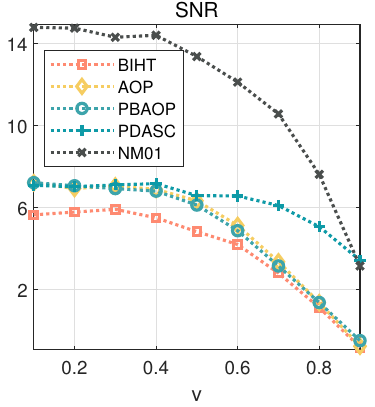}
\end{subfigure}
\begin{subfigure}{.325\textwidth}
	\centering
	\includegraphics[width=1\linewidth]{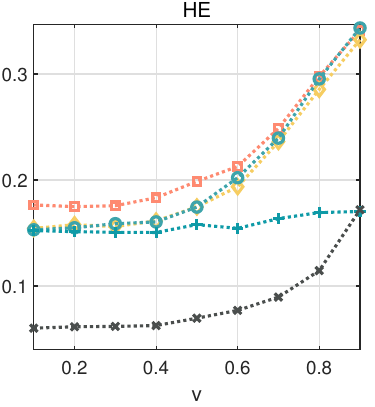}
\end{subfigure}
\begin{subfigure}{.325\textwidth}
	\centering
	\includegraphics[width=1\linewidth]{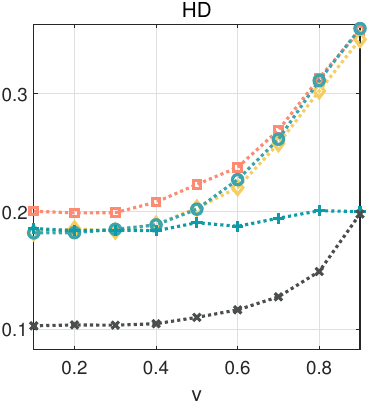}
\end{subfigure}\\\vspace{3mm}

\begin{subfigure}{.325\textwidth}
	\centering
	\includegraphics[width=1\linewidth]{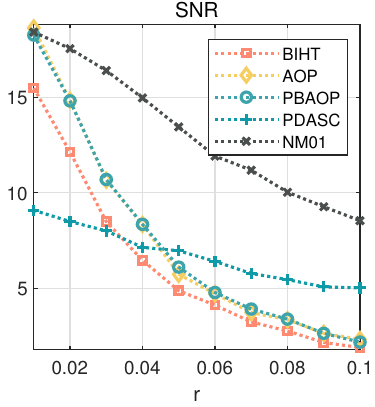}
\end{subfigure}
\begin{subfigure}{.325\textwidth}
	\centering
	\includegraphics[width=1\linewidth]{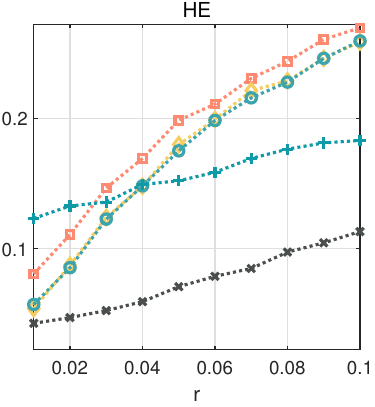}
\end{subfigure}
\begin{subfigure}{.325\textwidth}
	\centering
	\includegraphics[width=1\linewidth]{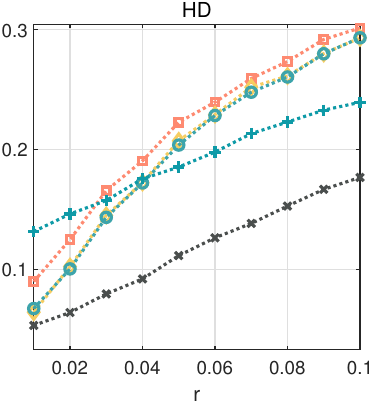}
\end{subfigure}

\caption{Effects of $m$, $s$, $v$ and  $r$   for \cref{ex:cs-cor}.}
\label{fig:ex2-cs-ind-r}\vspace{-3mm}
\end{figure}

Four leading solvers are selected for comparison. They are {\tt PDASC}\footnote{\url{http://jszy.whu.edu.cn/jiaoyuling/en/lwcg/1349484/content/54893.htm\#lwcg}}\cite{huang2018robust}, {\tt BIHT} \footnote{\url{https://laurentjacques.gitlab.io/publication/}}\cite{jacques2013robust}, {\tt AOP} \footref{aop}\cite{yan2012robust}  and {\tt PBAOP}  \footnote{\url{http://www.esat.kuleuven.be/stadius/ADB/huang/downloads/1bitCSLab.zip}\label{aop}}\cite{huang2018pinball},  where the last three require to specify the true sparsity level $s$,  and the last two also need a flipping ratio $L$.  As in \cite{yan2012robust},  we choose $L={\tt HD}$, where {\tt HD} is the hamming distance  generated by {\tt BIHT}.  We also apply the refinement step to {\tt PDASC} so that all five methods produce $s$-sparse solutions.   Finally, all methods start with $\bfx^0=0$  and their solutions are normalized to have a unit length. 
\subsubsection{Comparison} 

We now apply the five methods to solve \cref{ex:cs-cor}  under different scenarios. For each scenario, we report average results over $200$ instances if $n\leq1000$ and $20$ instances otherwise.  For small scale instances, we set five parameters as $(m,n,s,v,r)=(500,250,5,0.5,0.05)$.   To see the effect of each of these parameters,  we tested one parameter while the others being fixed. 

\begin{itemize}
\item  {Effect of $m\in\{0.1,0.2,\ldots,1\}n$}. We note that the bigger $m$ enables the better performance, since more samples are available to recover the signal. It can be clearly seen from Fig.~\ref{fig:ex2-cs-ind-r} that  {\tt NM01} gets the largest {\tt SNR} and the smallest {\tt HD} and {\tt HE}, leading to a better performance than the others.

\item {Effect of $s\in\{2,3,\ldots,10\}$}. The three sub-figures in the second row of   Fig.~\ref{fig:ex2-cs-ind-r} indicate that it is getting more difficult to recover the ground truth signal when $s$ increases.  In comparison with other methods, {\tt NM01} delivers the best recoveries as it achieves the highest {\tt SNR} and the smallest {\tt HD} and {\tt HE}. 

\item {Effect of $v\in\{0.1,0.2,\ldots,0.9\}$}.  The third row sub-figures in  Fig.~\ref{fig:ex2-cs-ind-r} demonstrate that the bigger values of $v$ degrade the performance of each method, because  each pair of two rows of $A_0$ is more correlated with increasing $v$.  
It is observed that {\tt NM01} delivers the best results when $v<0.9$.
  
\item {Effect of  $r\in\{0.02,0.04,\ldots,0.2\}$}.   As expected, the bigger $r$ is (i.e., the more signs are flipped), the harder the recovery is. This can be seen in the sub-figure in the last row of Fig.~\ref{fig:ex2-cs-ind-r}. 
{\tt NM01} outperforms the others.

\item {Effect of $n\in\{2000,4000,\ldots,10000\}$}. For the higher dimensional instances, we fix $m=n/2,  s=5n/1000,  v=0.5$ and  $r=0.05$. We record the average results in   \cref{fig:ex2-cs-ind-n} where  {\tt NM01} achieves the most desirable recovery accuracy. For the computational time,  the other methods are naturally expected to run super-fast since they belong to the family of greedy methods that exploit the sparse structure of the solutions. Nevertheless, {\tt NM01} is relatively competitive in terms of the computational speed. 

\end{itemize}

\begin{table}[!th]\vspace{-2mm}
	\renewcommand{\arraystretch}{.9}\addtolength{\tabcolsep}{1pt}
	\caption{Effect of the higher $n$ for \cref{ex:cs-cor}.}\vspace{-2mm}
	\label{fig:ex2-cs-ind-n}
	\begin{center}
		\begin{tabular}{l|ccccc  c cccccc|}
			\hline
   	&	 {\tt BIHT} 	&	  {\tt AOP} 	&	   {\tt PBAOP}     	&	  {\tt PDASC} 	&	  {\tt NM01} 	&	&	 {\tt BIHT} 	&	  {\tt AOP} 	&	   {\tt PBAOP}     	&	  {\tt PDASC} 	&	  {\tt NM01} 	\\\cline{2-12}
 $n$  	&\multicolumn{5}{c}{{\tt SNR}}&&\multicolumn{5}{c}{{\tt TIME}}\\\cline{1-6}\cline{8-12}
$2000$	&	7.438 	&	6.159 	&	6.960 	&	8.023 	&	11.37 	&	&	0.034 	&	0.414 	&	0.176 	&	0.061 	&	0.170 	\\
$4000$	&	6.509 	&	6.791 	&	6.843 	&	5.545 	&	10.96 	&	&	0.276 	&	1.832 	&	0.773 	&	0.255 	&	0.880 	\\
$6000$	&	7.008 	&	7.014 	&	6.967 	&	3.792 	&	10.02 	&	&	0.803 	&	4.149 	&	2.062 	&	0.636 	&	1.884 	\\
$8000$	&	7.357 	&	7.436 	&	7.225 	&	3.346 	&	10.01 	&	&	1.466 	&	7.279 	&	3.778 	&	1.186 	&	3.715 	\\
$10000$	&	7.841 	&	7.726 	&	7.882 	&	1.489 	&	9.915 	&	&	2.414 	&	11.76 	&	6.777 	&	2.081 	&	5.675 	\\
\hline
  	&\multicolumn{5}{c}{{\tt HE}}&&\multicolumn{5}{c}{{\tt HD}}\\\cline{2-6}\cline{8-12}
$2000$	&	0.201 	&	0.204 	&	0.206 	&	0.180 	&	0.129 	&	&	0.170 	&	0.176 	&	0.175 	&	0.145 	&	0.091 	\\
$4000$	&	0.207 	&	0.203 	&	0.201 	&	0.226 	&	0.125 	&	&	0.177 	&	0.174 	&	0.171 	&	0.198 	&	0.087 	\\
$6000$	&	0.203 	&	0.204 	&	0.206 	&	0.271 	&	0.134 	&	&	0.173 	&	0.174 	&	0.176 	&	0.247 	&	0.097 	\\
$8000$	&	0.205 	&	0.202 	&	0.202 	&	0.285 	&	0.133 	&	&	0.174 	&	0.171 	&	0.172 	&	0.262 	&	0.094 	\\
$10000$	&	0.200 	&	0.201 	&	0.197 	&	0.330 	&	0.135 	&	&	0.168 	&	0.169 	&	0.165 	&	0.312 	&	0.097 	\\
\hline
		\end{tabular}
	\end{center}
\end{table}

\begin{remark} ({\bf On nonsingularity of the Jacobian matrix.})
 {
We finish this section by discussing the important issue of nonsingularity
of the (smoothing) Jacobian matrix $\nabla F_{\mu_k} (\bfw^k, T_k)$
used in  \Cref{Alg-NM01}. 
As pointed out in Introduction, its nonsingularity is equivalent to the
the nonsingularity of $M_k := \nabla^2 f(\bfx^k) + A_{T_k}^\top A_{T_k}/\mu_k$.
If $\nabla^2 f(\bfx^k)$ is positive definite, then $M_k$ is always nonsingular.
This is the case for the SVM problems tested. 
For the problem of 1-bit compressed sensing, we let
\[
  C_* := \nabla^2 f(\bfx^*) = \diag\left\{
    \frac{q\left[\varepsilon^2-(1-q)(x^*_i)^2\right]}{(\varepsilon^2+(x^*_i)^2)^{2-q/2}}, \ \ i=1, \ldots, n
  \right\},
\]
where $\bfx^*$ is the limit of the sequence $\{\bfx^k\}$. 
Suppose $C_*$ is nonsingular, then $C_k := \nabla^2 f(\bfx^k)$ is also nonsingular
when $\bfx^k$ is close to $\bfx^*$. The Woodbury matrix identity implies
that the nonsingularity of $M_k$ is equivalent to that of
the matrix 
\[
\widehat{M}_k := I + \underbrace{({1}/{\mu_k}) A_{T_k} C_k^{-1} A_{T_k}^\top}_{=: \Delta_k}.
\]
Since $\mu_k$ converges to $0$, $\Delta_k$ cannot have $(-1)$ as its eigenvalue when $\bfx^k$ is close to $\bfx^*$ and hence $M_k$ is always nonsingular.
To slightly generalize the above argument, as long as $\Delta_k$ does not
have $(-1)$ among its eigenvalues, $\widehat{M}_k$ (hence $M_k$) is always nonsingular. And the chance for $\Delta_k$ to have $(-1)$ as its eigenvalue is
extremely small in general.
This is what we experienced in our test. 
The argument above does raise the question how to ensure the nonsingularity.
It comes back to the globalization issue of  \Cref{Alg-NM01}.
Our proposal is to use a gradient method whenever singularity becomes an issue.
We leave this to future research.
}	
\end{remark}

\section{Conclusion} \label{Sec-Conclusion}

Optimizing the $0/1$-loss function has been a challenging task for several decades, and few optimality conditions or theoretical convergence guarantees have been established for most of the $0/1$-loss function minimizations.
This paper is the first to develop Newton's method with guaranteed quadratic convergence.
 This has come a long way by first proposing a $P$-stationarity condition that leads to stationarity equations, and then establishing the desired convergence results with help of very technical control over the growth of residue equations. The excellent numerical performance of the proposed method for solving the SVM and 1-bit CS problems indicate that it might work well for other related applications.  We strongly feel that the techniques developed in this paper can be extended to a more general case,  where $A\bfx+\bfb$ in \eqref{ell-0-1} is replaced by some non-linear functions.

\section*{Acknowledgements}
We would like to thank both the referees for their detailed comments that have
helped to 
improve the quality of the paper. In particular, we thank one referee for suggesting the current title and the other for pointing out the link of
the proposed algorithm to the primal-dual active-set algorithms extensively 
studied among semi-smooth Newton methods.

\bibliographystyle{siamplain}
\bibliography{references}
\end{document}